\newcommand{\vertiii}[1]{{\left\vert\kern-0.25ex\left\vert\kern-0.25ex\left\vert #1
    \right\vert\kern-0.25ex\right\vert\kern-0.25ex\right\vert}}
\renewcommand*\subjclass[2][2000]{%
  \def\@subjclass{#2}%
  \@ifundefined{subjclassname@#1}{%
    \ClassWarning{\@classname}{Unknown edition (#1) of Mathematics
      Subject Classification; using '1991'.}%
  }{%
    \@xp\let\@xp\subjclassname\csname subjclassname@#1\endcsname
  }%
}
\newtheorem{theorem}{Theorem}[section]
\newtheorem{lemma}[theorem]{Lemma}
\newtheorem*{lemma*}{Lemma}
\newtheorem{proposition}[theorem]{Proposition}
\newtheorem{corollary}[theorem]{Corollary}
\newtheorem{definition}[theorem]{Definition}
\def\1ton{1,2,\ldots,n}
\newcommand{\R}{\mathbb{R}}
\newcommand{\B}{\mathbb{B}}
\newtheorem{remark}[theorem]{Remark}
\numberwithin{equation}{section}
\newcommand{\abs}[1]{\lvert#1\rvert}
\renewcommand{\imath}{i} 
\def\XXint#1#2#3{{\setbox0=\hbox{$#1{#2#3}{\int}$}
\vcenter{\hbox{$#2#3$}}\kern-.5\wd0}}
\def\ge{\geqslant}
\begin{document}

\title[Contraction property of  certain classes of subharmonic functions in $\mathbb{R}^n$]{Contraction property of certain classes of log-$\mathcal{M}-$subharmonic functions  in the unit ball}


\keywords{Hyperbolic harmonic functions, isoperimetric inequality}
\author{David Kalaj}
\address{University of Montenegro, Faculty of Natural Sciences and
Mathematics, Cetinjski put b.b. 81000 Podgorica, Montenegro}
\email{davidk@ucg.ac.me}

\begin{abstract}
 We prove a contraction property of certain classes of smooth functions, whose absolute values of elements are  log-hyperharmonic functions in the unit ball, thus extending the results of Kulikov to higher-dimensional space (GAFA (2022)). Moreover, by applying those results we get some new results for harmonic mappings in the complex plane.
\end{abstract}
\maketitle
\tableofcontents
\sloppy

\maketitle
\section{Introduction}
In this paper  $\mathbb{B} = \{ x\in \mathbb{R}^n : |x| < 1\}$ is the unit ball. Here and in the sequel for $x=(x_1,\dots, x_n)$, $|x|:=\sqrt{\sum_{k=1}^n x_k^2}$.

Assume that $\mathcal{M}$ is the group of M\"obius transformations of the unit ball onto itself (see e.g. \cite{alf}).
We introduce the M\"{o}bius invariant hyperbolic measure on the unit ball. For $x\in \mathbb{B}$ we define it as
$$d\tau(x) = \frac{2^n}{(1-|x|^2)^n} \frac{dV(x)}{{\omega_n}},$$ where $\omega_n=V(B)$ is the volume of the unit ball.

A mapping $u\in C^{2}(\mathbb{B}^{n}, \mathbb{C})$ or more generally $u\in C^{2}(\mathbb{B}^{n}, \mathbb{R}^k)$ is said to be {\it hyperbolic harmonic} or $\mathcal{M}-$\emph{harmonic}  if $u$ satisfies the hyperbolic Laplace equation
\begin{equation}\label{hyphar}
\Delta_{h}u(x)=(1-|x|^2)^2\Delta u(x)+2(n-2)(1-|x|^2)\sum_{i=1}^{n} x_{i} \frac{\partial u}{\partial x_{i}}(x)=0,
\end{equation}
 where
 $\Delta$ denotes the usual Laplacian in $\mathbb{R}^{n}$. We call $\Delta_{h}$ the {\it hyperbolic Laplace operator}.
See Rudin \cite{rudin} and Stoll \cite{stoll}.

The Poisson kernel for $\Delta_h$ is defined by $$P_h(x,\zeta)=\frac{(1-|x|^2)^{n-1}}{|x-\zeta|^{2n-2}}, \ \ (x,\zeta)\in \B\times \mathbb{S}.$$

Then for fixed $\zeta$, $x\to P_h(x,\zeta)$ is $\mathcal{M}-$harmonic function and for a mapping $f\in \mathcal{L}(\mathbb{S})$, the function $$u(x) = P_h[f](x):=\int_{\mathbb{S}}P_h(x,\zeta) f(\zeta) d\sigma(\zeta)$$ is the Poisson extension of $f$ and it is $\mathcal{M}-$harmonic in $\B$.

 We say that a real function $u$ is $\mathcal{M}-$subharmonic if $\Delta_h u(x) \ge 0$. The definition can be extended to the case of upper semicontinuous functions, by using the so-called invariant mean value property \cite{stoll}.

Now we define the Hardy type space.
\begin{enumerate}
\item
As in \cite[Definition~7.0.1]{stoll}, for $0<p\le \infty$, we denote by $\mathcal{S}^p$ the {\it Hardy-type} space of non-negative $\mathcal{M}-$subharmonic functions $f$ on $\B$ such that
\begin{equation}\label{meanconv}
\|f\|_p^p=\sup_{0<r<1}\int_{\mathbb{S}}|f(r\zeta)|^pd\sigma(\zeta)<\infty.
\end{equation}
When $p=\infty$ we put $\|f\|_\infty=\sup_{x\in\B} f(x)$.
\item  For $0 < p < \infty$ we say that a Borel function $f:\mathbb{B}\to \mathbb{C}$ belongs to the Hardy space $h^p$ if $|f|\in \mathcal{S}^p$. Then we define
$\|f\|_p := \| |f|\|_p$. When $p=\infty$ we put $\|f\|_{\infty}=\sup_{x\in\B} |f(x)|$.
\end{enumerate}

 Further \begin{equation}\label{fol}(\Delta_h u)(m(x)) =\Delta_h (u\circ m)(x),\end{equation} for every M\"obius transformation $m\in\mathcal{M}$
 of the unit ball onto itself.

 For $n=2$ the $\mathcal{M}-$harmonic and $\mathcal{M}-$subharmonic functions are just harmonic and subharmonic functions.

If $f$ is $\mathcal{M}-$subharmonic, then we have  the following Riesz decomposition theorem of Stoll \cite[Theorem~9.1.3]{stoll}:  $$f(x) = F_f(x)- \int_{\B} G_h(x,y) d\mu_f(y),$$ provided that $f\in \mathcal{S}^1$, where  $F_f(x)$ is the least $\mathcal{M}-$harmonic majorant of $f$ and $\mu_f$ is the $\mathcal{M}-$ Riesz measure of $f$, and $G_h(x,y)$ is the Green function of $\Delta_h$. If $f\in \mathcal{S}^p$, where $p>1$, then $g(x)=F_f(x)=P_h[\hat f](x)$, where $\hat f$ is the boundary function of $f$ (\cite[Theorem~7.1.1]{stoll}).

From the formula \eqref{fol}, by putting $u=\mathrm{Id}$, and $m\in\mathcal{M}$, we get
$$\Delta_h m=2 (n-2)(1-|m|^2)m.$$ So M\"obius transformations are (considered as vectorial functions) hyperbolic harmonic only in the case $n=2$.

 By putting $u(x) = g(|x|)$ and inserting in \eqref{hyphar} we arrive to the equation $$\Delta_h u= \left(1-r^2\right) \left(\frac{\left(2 (-2+n) r^2+(-1+n) \left(1-r^2\right)\right) g'(r)}{r}+\left(1-r^2\right) g''(r)\right), $$ where $r=|x|$.
The hypergeometric function $F$, which we use in this paper is defined by $$F\left[\begin{array}{c}
                                           a,b,c \\
                                           u,v
                                         \end{array};t\right]:=\sum_{n=0}^{\infty}\frac{(a)_{n}(b)_{n}(c)_n}{n!(u)_n (v)_n}t^{n},\enspace \mbox{for}\enspace|t|<1,$$
   and by the continuation elsewhere. Here $(a)_n$ denotes { the} shifted factorial, i.e., $(a)_{n}=a(a+1)...(a+n-1)$ {
with   any real number $a$}.

Then one of solutions to the equation $\Delta_h \log v= -4(n-1)^2$, is given by (see Proposition~\ref{pbelo} below)
 \begin{equation}\label{Phi}\Phi_n(r)=\exp\left\{\frac{(n-1) (2-n) r^2}{ n} F\left[\begin{array}{c}
                                           1,1,2-\frac{n}{2} \\
                                           2,1+\frac{n}{2}
                                         \end{array}; r^2\right]\right\} \left(1-r^2\right)^{n-1}.\end{equation}
                                         We will also sometimes write $\Phi_n(x)$ instead of $\Phi_n(|x|)$.
                                        If $n=2$, then $\Phi_n(|x|) = 1-|x|^2$ and this coincides with the case treated in \cite{kulik} by Kulikov.

\begin{definition}
For $0 < p < \infty$ and $\alpha > 1$ we say that a complex smooth function $f$  in $\mathbb{B}$ belongs to the $\mathcal{M}-$Bergman space ${B}^p_\alpha$ if
$$\|f\|_{\alpha,p}^p = c(\alpha)\int_\mathbb{B} |f(x)|^p \Phi_n^\alpha(|x|) d\tau(x) < \infty,$$ where
\begin{equation}\label{calpha}\frac{1}{c(\alpha)}= \int_{\B} \Phi^\alpha_n(x)(1-|x|^2)^{-n}\frac{dV(x)}{{\omega_n}},\end{equation} and $\Phi_n$ is a function defined in \eqref{Phi} above. \end{definition}
Observe that in view of \eqref{esfi}, $$\frac{1}{c(\alpha)}\ge E_n^\alpha \int_0^1 r^{n}(1-r^2)^{\alpha(n-1)-n} dr.$$ This implies that \begin{equation}\label{caz}
\lim_{\alpha\to 1+}c(\alpha)=0.
\end{equation}

\subsection{Admissible monoid}
We define $\mathfrak{E}$ to be the set of complex continuous functions $g$ in $\mathbb{B}$ and  real analytic  in $g^{-1}(\mathbb{C}\setminus\{0\})$   such that
$f:=\log |g|$ is $\mathcal{M}-$subharmonic. If $g(a)=0$, then we put $f(a)=-\infty$. Let us remind  that in the definition of subharmonicity, based on the mean value property, continuity of the function is not assumed a priori, but only upper continuity and the value $-\infty$ for the function is a legitimate value.

 Let  $\mathfrak{E}_+=\{f\in \mathfrak{E}: f\ge 0\}$. Observe that $\mathfrak{E}$ is a monoid where the operation is simply the multiplication of two functions.  Observe that $1=e^0$, so $1\in \mathfrak{E}$. This monoid contains the Abelian group $\mathcal{G}=\{e^f: \Delta_hf=0\}$.
Then for $a,b\in\mathfrak{E}_+$, $c, d\in \mathfrak{E}$,  $p\ge 0$ and $\alpha, \beta>0$ we have
\begin{enumerate}
\item $a\cdot b \in\mathfrak{E}_+$, $c\cdot d\in \mathfrak{E}$,
\item $a^p\in \mathfrak{E}_+$, $c^p\in \mathfrak{E}$,
\item $\alpha a +\beta b \in \mathfrak{E}_+$.
\end{enumerate}

In other words, $\mathfrak{E}_+$ is a convex cone and at the same time a monoid.

Previous statements follow from the straightforward calculation of the invariant Laplacian.
First of all, we have
$$\alpha e^f = e^{\log \alpha +f}.$$ So if $a\in \mathfrak{E}_+$ and $\alpha>0$, then so is $\alpha a$.
Further if $a=e^f$ and $b=e^g$, then $c=a+b= e^{\log (e^f+e^g)}$. Thus
$$\Delta_h\log c=\Delta_h  \log (e^f+e^g)=\frac{e^f\Delta_h f +e^g\Delta_h g}{e^f+e^g}+\frac{e^{f+g}}{e^f+e^g}(1-r^2)|\nabla (f-g)|^2,$$
where
$$A=\Delta_h g=f,\  \ B=\Delta_h f=g.$$

So if $A, B\ge 0$, then $\Delta_h  \log (e^f+e^g)\ge 0$. We also refer to the paper \cite{jfa} for some similar properties of log-subharmonic functions.

Let us collect some additional features of $\mathcal{M}-$harmonic and $\mathcal{M}-$subharmonic functions.
\begin{enumerate}
\item
If $\log f$ is $\mathcal{M}-$subharmonic, then $f=e^{\log f}$ is $\mathcal{M}$-subharmonic.
\item
If $\log f$ is $\mathcal{M}-$subharmonic, then $p\log f$ is $\mathcal{M}$-subharmonic and so $f^p=e^{p\log f}$ is $\mathcal{M}$-subharmonic.
\item
If $f$ is $\mathcal{M}-$harmonic, then $|f|$ is $\mathcal{M}-$subharmonic.
\item
If $f$ is $\mathcal{M}-$harmonic, then $f\circ m$ is $\mathcal{M}-$harmonic, for every M\"obius transformation $m\in\mathcal{M}$ of the unit ball onto itself.
\item
If $f$ is $\mathcal{M}-$harmonic, then $|f|^p$ is $\mathcal{M}-$subharmonic for $p\ge 1$.

\end{enumerate}
For the above facts, we refer to the monograph by Stoll \cite{stoll}.
Let us illustrate the proof of one of the properties: If $u$ is $\mathcal{M}-$subharmonic, then $e^u$ is $\mathcal{M}-$subharmonic.
Indeed
 \begin{equation}\label{posexp}\begin{split}\Delta_h e^u &=  (1-|x|^2)^2 |\nabla u|^2 e^u \\&+ e^u((1-|x|^2)^2\Delta u(x)+2(n-2)(1-|x|^2)\sum_{i=1}^{n} x_{i} \frac{\partial u}{\partial x_{i}}(x))\\&=(1-|x|^2)^2 |\nabla u|^2 e^u +e^u \Delta_h u\ge 0.\end{split}\end{equation}
\begin{remark}
Unfortunately, the class of holomorphic functions in $\mathbb{B}\subset \mathbb{C}^n\cong \mathbb{R}^{2n}$ for $n\ge 2$ is not contained in the monoid $\mathfrak{E}$. To see this, let $n=2$ and take the holomorphic mapping $f(w,z)=g(z)$ which depends only on $z$. Here $g$ is a holomorphic non-vanishing function. Then straightforward computations give that $$\Delta_h \log |f(w,z)|=8 (1-|w|^2-|z|^2)\Re\left(\frac{z g'(z)}{g(z)}\right), \ |z|<1.$$ Hence the function $f$ is $\mathcal{M}-$subharmonic if and only if $g$ is a starlike function, i.e. if $\Re\left(\frac{z g'(z)}{g(z)}\right)\ge 0$ for $|z|<1$.
\end{remark}

\begin{definition}\label{monomono}
 For $0 < p < \infty$ and $\alpha > 1$ we define
 \begin{enumerate}
\item the $\mathcal{M}-$ Hardy  monoid  $\mathbf{h}^p$ consisting of functions $f$ in $h^p\cap\mathfrak{E}$: satisfying the additional condition that there is a sequence of bounded $\mathcal{M}-$log-subharmonic functions $f_m$ converging in $h^p$ norm and poinwisely to $f$.
 \item the $\mathcal{M}-$Bergman monoid $\mathbf{B}^p_\alpha$ consisting of functions $f$ in ${B}^p_\alpha\cap \mathfrak{E}$. 
\end{enumerate}
We also refer to \cite{vuko} for the corresponding Bergman space of holomorphic mappings in the space which is different from our space.
\end{definition}
\begin{remark}
{Note that the additional condition in the previous definition is redundant for smooth log-subharmonic functions for $n=2$. In this case the dilatations $f_m(z)=f(\rho z)$ are log-subharmonic functions, $\rho=\rho_m=\frac{m}{m+1}$ and converge in ${h}^p$ norm and pointwisely  to $f$. See e.g. \cite[Proposition~1.3]{hakan}.} We also believe it is redundant in higher-dimensional case.
\end{remark}

Observe that for $f(x) \equiv 1$ we have $\|f\|_{p} = \|f\|_{\alpha,q} = 1$ for all  $p, q>0$ and $\alpha>1$.

An important property of these spaces is that point evaluations are continuous functionals. For this fact see Proposition~\ref{propo1}.

One of the interesting questions about those spaces is which space is the subset of the other space. We consider the case when the quotient  $\frac{p}{\alpha}$ is held constant, in which case we have
$$\mathbf{B}^p_\alpha \subset \mathbf{B}^q_\beta,\quad \frac{p}{\alpha} = \frac{q}{\beta} = r,\quad p < q$$
and $\mathbf{h}^r$ is contained in all these spaces. This follows from our results.

Recently for $n=2$, it was asked whether these embeddings are contractions, that is whether the norm $  \|f \|_{\mathbf{B}^{r\alpha}_\alpha}$ is decreasing in $\alpha$. In the case of Bergman spaces, this question was asked by Lieb and Solovej \cite{9}. They proved that such contractivity implies their Wehrl-type entropy conjecture for the $SU(1,1)$ group. In the case of contractions from the Hardy spaces to the Bergman spaces, it was asked by Pavlovi\'c in \cite{13} and by Brevig, Ortega-Cerd\`a, Seip, and Zhao \cite{7} concerning the estimates for analytic functions. In a recent paper \cite{kulik}, Kulikov confirmed these conjectures, and  he proved more general results where the function $t^r$ is replaced with a general convex or monotone function, respectively.

\subsection{Statement of main results}
In this paper, we extend all those results to the higher-dimensional space by proving the following theorems.

\begin{theorem}\label{Hardy}
Let $p>0$. $G:[0,\infty)\to \R$ be an increasing function. Then the maximum value of
\begin{equation}\label{hardyineq}
\int_\mathbb{B} G(|f(x)|^p\Phi_n(x))d\tau(x)
\end{equation}
is attained for $f(x) \equiv 1$, subject to the condition that $f\in \mathbf{h}^p $ and $ \|f \|_{p}=1$.
\end{theorem}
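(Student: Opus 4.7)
The plan is to adapt Kulikov's planar level-set/isoperimetric approach to the hyperbolic $n$-ball. Write $F(x):=|f(x)|^p\Phi_n(x)$ and $\mu_f(t):=\tau(\{F>t\})$. Since $G$ is increasing, Fubini applied to $G(F(x))-G(0)=\int_0^{F(x)}dG(t)$ yields
$$\int_\B G(F)\,d\tau=G(0)\,\tau(\B)+\int_0^\infty \mu_f(t)\,dG(t),$$
and comparing with the corresponding formula for $f\equiv 1$ (the $G(0)\tau(\B)$ term, possibly infinite, cancels), the theorem reduces to the pointwise distributional bound
$$\mu_f(t)\;\le\;\mu_1(t):=\tau(\{\Phi_n>t\})\qquad \text{for every }t>0.$$
The analytic input is that, since $f\in\mathbf{h}^p\subset\mathfrak{E}$, $\log|f|$ is $\mathcal{M}$-subharmonic, while Proposition~\ref{pbelo} asserts $\Delta_h\log\Phi_n\equiv-4(n-1)^2$; combined,
$$\Delta_h\log F(x)\;\ge\;-4(n-1)^2\qquad\text{on }\B.$$

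The core step is a differential inequality for $\mu_f$. Integrate the last bound over the superlevel set $\Omega_t=\{F>t\}$. Since $\Delta_h$ is in divergence form with respect to a natural hyperbolic weight, Green's identity against $d\tau$ turns the left side into a boundary integral, giving
$$\int_{\partial\Omega_t}|\nabla\log F|\,d\sigma_h\;\le\;4(n-1)^2\,\tau(\Omega_t).$$
Combine this with the coarea identity $-\mu_f'(t)=\int_{\{F=t\}}|\nabla F|^{-1}\,d\sigma_h$, the Cauchy--Schwarz estimate for $|\partial_h\Omega_t|^2$, and the sharp isoperimetric inequality in hyperbolic $n$-space lower-bounding $|\partial_h\Omega_t|$ in terms of $\tau(\Omega_t)$, to extract a first-order ODE inequality
$$-t\,\mu_f'(t)\;\le\;H_n\bigl(\mu_f(t)\bigr),$$
where $H_n$ encodes the hyperbolic isoperimetric profile.

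For $f\equiv 1$, $F=\Phi_n$ is radial and decreasing, the level sets $\Omega_t$ are hyperbolic balls centered at the origin, and every inequality in the chain saturates; thus $\mu_1$ satisfies the same ODE with equality. A standard ODE comparison with the endpoint condition $\mu_f(t)\to 0$ as $t\to (\sup F)^-$, which follows from the Poisson/Hardy majorization $|f(x)|^p\Phi_n(x)\le C_{n,p}\|f\|_p^p$ valid for $\mathcal{M}$-log-subharmonic $f$ with $\|f\|_p=1$, then forces $\mu_f\le\mu_1$. The passage from bounded $\mathcal{M}$-log-subharmonic $f$ to general $f\in\mathbf{h}^p$ is handled by the approximating sequence $\{f_m\}$ built into Definition~\ref{monomono}, together with Fatou's lemma.

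The principal obstacle is the middle paragraph: in dimension two the hyperbolic isoperimetric profile and the weight $\Phi_2=1-|z|^2$ combine into a clean, separable, explicitly integrable ODE, but for $n\ge 3$ both objects are expressible only through hypergeometric functions. Verifying that $f\equiv 1$ saturates the chain exactly, so that $\mu_1$ really is the extremal solution of the derived ODE, must therefore rely on the precise form of $\Phi_n$ in~\eqref{Phi}, which was constructed as the radial solution of $\Delta_h\log v=-4(n-1)^2$.
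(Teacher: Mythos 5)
Your skeleton matches the paper's: reduce \eqref{hardyineq} to the distributional bound $\mu_f(t)\le\mu_1(t)$, and obtain a differential inequality for $\mu_f$ from the coarea formula, Cauchy--Schwarz, the divergence theorem applied to $\Delta_h\log u\ge -4(n-1)^2$, and Schmidt's hyperbolic isoperimetric inequality (this is exactly Theorem~\ref{monotone}). The gap is in your final step. The differential inequality says that $g(t)=t\exp\bigl[\int_0^{\mu_f(t)}\gamma\Upsilon\bigr]$ is \emph{decreasing}, and with the endpoint condition $\mu_f(t_\circ)=0$ this propagates information in the wrong direction: it yields $g(t)\ge g(t_\circ)=t_\circ$ for $t<t_\circ$, i.e.\ a \emph{lower} bound $\mu_f(t)\ge\Theta(\log(t_\circ/t))$, and no upper bound at all. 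A ``standard ODE comparison'' therefore cannot force $\mu_f\le\mu_1$. This is not a technicality: the monotonicity statement is invariant under replacing $f$ by $\lambda f$, so the normalization $\|f\|_p=1$ must enter in an essential quantitative way beyond the (non-sharp) pointwise bound $|f|^p\Phi_n\le C_{n,p}\|f\|_p^p$; otherwise the claimed conclusion would be false for $\lambda f$ with $\lambda>1$.

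The paper closes this by a contradiction argument that you are missing. Suppose $\mu_f(t_1)>\mu_1(t_1)$ for some $t_1$; since $\mu_1(s)\to\infty$ as $s\to0$, write $\mu_f(t_1)=\mu_1(t_1/c)$ with $c>1$. The monotonicity of $g$ (used precisely in its lower-bound direction) then gives $\mu_f(t)\ge\mu_1(t/c)$ for all $t<t_1$. Integrating against $t^{r-1}\,dt$ and using that the normalizing constant $c(r)\to0$ as $r\to1^+$, one finds $\liminf_{r\to1^+}\|f\|_{r,pr}^{pr}\ge c>1$. This contradicts $\lim_{r\to1^+}\|f\|_{r,pr}^{pr}=\|f\|_p^p=1$, which is the genuinely needed analytic input (Propositions~\ref{propozicija} and~\ref{propozicija1}) and is where $\|f\|_p=1$ is actually used. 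Your proposal never invokes this limit of weighted Bergman norms, so the argument as written does not prove \eqref{hardy bound}. A secondary point: the passage from bounded to general $f\in\mathbf{h}^p$ in the paper is not a one-line Fatou argument either; it is routed through the contraction inequality of Theorem~\ref{shtune} (proved first for bounded approximants) in order to establish Proposition~\ref{propozicija1} for unbounded $f$.
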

\begin{theorem}\label{Bergman}
Let $p>0$ and $\alpha>0$. Let $G:[0,\infty)\to \R$ be a convex function. Then the maximum value of
\begin{equation}\label{bergmanineq}
\int_\mathbb{B} G(|f(x)|^p\Phi_n(x))^\alpha)d\tau(x)
\end{equation}
is attained for $f(x) \equiv 1$, subject to the condition that $f\in \mathbf{B}^p_\alpha$ and $ \|f \|_{\alpha,p}=1$.
\end{theorem}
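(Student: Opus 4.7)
The plan follows Kulikov's strategy from the planar case $n=2$, adapted to higher dimensions via the invariant Laplacian $\Delta_h$ and the distinguished function $\Phi_n$ from \eqref{Phi}. First, I use a Choquet-type representation: every convex $G\colon[0,\infty)\to \R$ can be written as $G(t) = G(0) + G'_+(0)\,t + \int_0^\infty (t-s)^+\,d\mu(s)$ for some non-negative Borel measure $\mu$. Writing $w := |f|^p\Phi_n^\alpha$ and $w_0 := \Phi_n^\alpha$, the constraint $\|f\|_{\alpha,p}=1$ gives $\int_\B w\,d\tau = 1/c(\alpha) = \int_\B w_0\,d\tau$, so the constant and linear contributions are identical in $\int G(w)\,d\tau - \int G(w_0)\,d\tau$. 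By Fubini, the theorem reduces to the majorization-type inequality
$$\int_\B (w-s)^+\, d\tau \le \int_\B (w_0-s)^+\, d\tau, \qquad \forall\, s>0. \quad (\star)$$

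Next, I exploit the log-subharmonic structure. On the superlevel set $\Omega_s := \{w>s\}$, set $u := \log(w/s) = p\log|f|+\alpha\log \Phi_n-\log s$; then $u>0$ on $\Omega_s$, $u=0$ on $\partial\Omega_s$, and, since $\log|f|$ is $\mathcal M$-subharmonic (as $f\in\mathfrak E$) and $\Delta_h\log\Phi_n = -4(n-1)^2$ by Proposition~\ref{pbelo},
$$\Delta_h u = p\,\Delta_h\log|f| + \alpha\,\Delta_h\log\Phi_n \ge -4\alpha(n-1)^2.$$
The companion set $\Omega_s^0 := \{w_0>s\}$ is a geodesic hyperbolic ball centered at the origin (by radial monotonicity of $\Phi_n$), and $u_0 := \log(w_0/s)$ solves the analogous PDE with equality and zero boundary data. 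Using $w = s\,e^u$, inequality $(\star)$ rewrites as $\int_{\Omega_s}(e^u-1)\,d\tau \le \int_{\Omega_s^0}(e^{u_0}-1)\,d\tau$.

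Since $\Delta_h$ coincides (up to a positive multiplicative constant) with the Laplace--Beltrami operator of the hyperbolic metric on $\B$, the natural tool is a P\'olya--Szeg\H o / Talenti symmetrization in the hyperbolic ball. Let $\Omega_s^\sharp$ be the geodesic ball centered at $0$ with $\tau(\Omega_s^\sharp)=\tau(\Omega_s)$, let $u^\sharp$ be the decreasing radial rearrangement of $u$, and let $v$ solve $\Delta_h v = -4\alpha(n-1)^2$ in $\Omega_s^\sharp$ with zero boundary data. The hyperbolic isoperimetric inequality combined with Talenti's classical comparison yields $u^\sharp \le v$ pointwise, and equidistribution gives
$$\int_{\Omega_s}(e^u-1)\,d\tau = \int_{\Omega_s^\sharp}(e^{u^\sharp}-1)\,d\tau \le \int_{\Omega_s^\sharp}(e^v-1)\,d\tau.$$
To close against $\int_{\Omega_s^0}(e^{u_0}-1)\,d\tau$, I would use the explicit radial ODE derived from \eqref{Phi}: viewed as a function of the ball's $\tau$-measure, this quantity is maximized at $\rho$ equal to the hyperbolic radius of $\Omega_s^0$, which is forced by the shared integral $\int_\B w\,d\tau = \int_\B w_0\,d\tau$.

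The principal obstacle is the Talenti-type comparison in the invariant setting $(\B,d\tau,\Delta_h)$: one needs both the hyperbolic isoperimetric inequality (with geodesic balls as extremizers) and a precise identification of the extremal radius through a one-variable convexity argument based on the hypergeometric form of $\Phi_n$. A secondary technical point is that $\Omega_s$ need not be relatively compact in $\B$; this is handled by the approximating sequence $(f_m)$ of bounded log-subharmonic functions from Definition~\ref{monomono}, establishing $(\star)$ first for $f_m$ and passing to the limit by Fatou's lemma.
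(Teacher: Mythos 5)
Your reduction of the theorem to the majorization inequality $(\star)$ is sound (after normalizing $\lim_{t\to 0^+}G(t)=0$, as one must since $\tau(\B)=\infty$), and the PDE set-up on superlevel sets --- $\Delta_h u\ge -4\alpha(n-1)^2$ on $\Omega_s$ with $u=0$ on $\partial\Omega_s$, while $u_0$ solves the equation exactly on the geodesic ball $\Omega_s^0$ --- is correct. The gap is in the closing step. Your symmetrization chain ends with $\int_\B (w-s)^+\,d\tau\le s\,h(\tau(\Omega_s))$, where $h(m):=\int_{B_m}(e^{v_m}-1)\,d\tau$ over the geodesic ball $B_m$ of $\tau$-measure $m$; since $h$ is strictly increasing in $m$, comparing this with $s\,h(\tau(\Omega_s^0))=\int_\B(w_0-s)^+\,d\tau$ would require the pointwise distributional bound $\mu(s):=\tau(\{w>s\})\le\tau(\{w_0>s\})=:\mu_0(s)$ for every $s>0$. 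That bound is false in general: the single constraint $\int_\B w\,d\tau=\int_\B w_0\,d\tau$ only fixes $\int_0^\infty\mu(t)\,dt$, and whenever $\max w<\max w_0$ one necessarily has $\mu(s)>\mu_0(s)$ on an initial interval of $s$ (the distribution functions must cross). So the claim that $h$ ``is maximized at the radius of $\Omega_s^0$, forced by the shared integral'' cannot be repaired: $h$ is monotone and has no interior maximum, and the shared integral does not localize.

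What is missing is a mechanism coupling different levels. The paper obtains it in Theorem~\ref{monotone}: the coarea formula, Cauchy--Schwarz, the divergence theorem applied to $\nabla\log u/(1-|x|^2)^{n-2}$, and the hyperbolic isoperimetric inequality combine into the differential inequality $t\gamma\mu'(t)\mu(t)+\mu(t)/\Upsilon(\mu(t))\le 0$, i.e.\ $g(t)=t\exp\bigl[\gamma\int_0^{\mu(t)}\Upsilon\bigr]$ is non-increasing. Writing $\mu(t)=\Theta(\log(g(t)/t))$, this forces $\mu-\mu_0$ to change sign at most once, from $+$ to $-$, and the elementary crossing Lemma~\ref{lemm} then gives $\int G(w)\,d\tau\le\int G(w_0)\,d\tau$ directly, with no Choquet decomposition and no Talenti theorem. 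Your level-by-level comparison could in principle be upgraded to the same cross-level information (comparing $\Omega_{s'}\subset\Omega_s$ for $s'>s$ and differentiating recovers essentially the same inequality for $\mu'$), but as written the argument passes through a false intermediate claim. A minor further point: for the Bergman theorem the boundedness and boundary decay of $u$ follow from Proposition~\ref{may2023}, so the approximating sequence $f_m$ from Definition~\ref{monomono} is not needed here; it is used only in the Hardy case.
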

We will prove Theorem~\ref{Bergman} in Section~\ref{sec4}. We will first prove Theorem~\ref{Hardy} for bounded log-subharmonic functions and use such a statement for proving Theorem~\ref{shtune} below. More precisely, Theorem~\ref{monotone} and Proposition~\ref{propozicija} \eqref{alphap} below imply Theorem~\ref{kalajda} for a bounded function $f$. Further such a statement implies  Theorem~\ref{Hardy} for a bounded function $f$. Furthermore, Theorem~\ref{Bergman} and Theorem~\ref{Hardy} for bounded  functions imply Theorem~\ref{shtune}.  Then by using Theorem~\ref{shtune} we prove  Proposition~\ref{propozicija1}, which completes the missing part of the proof of Theorem~\ref{kalajda}  and this implies Theorem~\ref{Hardy} in full generality.
To prove Theorem~\ref{shtune} we apply  the convex and increasing function $G(t) = t^s, s > 1$. Note that we get that all the embeddings above between Hardy and Bergman monoids are contractions.
\begin{theorem}\label{shtune}
For all $0 < p < q < \infty$ and $1 < \alpha < \beta < \infty$ with $\frac{p}{\alpha}=\frac{q}{\beta} = r$ and for all $f\in \mathbf{h}^r$,  we have
\begin{equation}\label{two2}\|f\|_{\beta, q} \le \|f\|_{\alpha,p} \le \|f\|_{h^r}\end{equation}
with equality for $f(z) \equiv c$, where  $c\in \mathbb{C}$, or for $f$ belonging to the extremal set below.
\end{theorem}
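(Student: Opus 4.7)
The plan is to derive both inequalities in \eqref{two2} as one-line consequences of Theorems~\ref{Hardy} and \ref{Bergman} applied with the power function $G(t)=t^s$ for appropriate $s>1$. The algebraic engine is the hypothesis $p/\alpha=q/\beta=r$, which rewrites as the three useful identities $r\alpha=p$, $r\beta=q$, and $q/p=\beta/\alpha$; each of these is precisely what is needed to make one step work.

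For the right-hand contraction $\|f\|_{\alpha,p}\le\|f\|_{h^r}$, I would rescale so that $\|f\|_{h^r}=1$ and apply Theorem~\ref{Hardy} to $f\in\mathbf{h}^r$ with the Hardy exponent $r$ and the increasing function $G(t)=t^\alpha$ (legitimate since $\alpha>1>0$). Because $r\alpha=p$, we have $G(|f(x)|^r\Phi_n(x))=|f(x)|^p\,\Phi_n^\alpha(x)$, and the conclusion of Theorem~\ref{Hardy} reads
$$\int_{\mathbb{B}}|f(x)|^p\,\Phi_n^\alpha(x)\,d\tau(x)\le\int_{\mathbb{B}}\Phi_n^\alpha(x)\,d\tau(x)=\frac{1}{c(\alpha)}.$$
Multiplying by $c(\alpha)$ and extracting $p$-th roots yields $\|f\|_{\alpha,p}\le 1$, which is the desired inequality and also gives the inclusion $\mathbf{h}^r\hookrightarrow\mathbf{B}^p_\alpha$, so that the Bergman norms in \eqref{two2} are finite.

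For the left-hand contraction $\|f\|_{\beta,q}\le\|f\|_{\alpha,p}$, I would normalize $\|f\|_{\alpha,p}=1$. Setting $s:=q/p=\beta/\alpha>1$, the function $G(t)=t^s$ is convex, and
$$G\bigl(|f(x)|^p\,\Phi_n^\alpha(x)\bigr)=|f(x)|^q\,\Phi_n^\beta(x).$$
Theorem~\ref{Bergman}, applied to $f\in\mathbf{B}^p_\alpha$ with parameters $(p,\alpha)$ and this $G$, then gives
$$\int_{\mathbb{B}}|f|^q\,\Phi_n^\beta\,d\tau\le\int_{\mathbb{B}}\Phi_n^\beta\,d\tau=\frac{1}{c(\beta)},$$
so $\|f\|_{\beta,q}\le 1=\|f\|_{\alpha,p}$.

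For equality in the constant case, a direct check from the normalization of $c(\alpha)$ and $c(\beta)$ shows that $f\equiv c\in\mathbb{C}$ yields $\|f\|_{h^r}=\|f\|_{\alpha,p}=\|f\|_{\beta,q}=|c|$, so constants saturate both bounds, and the description of the broader extremal set is inherited verbatim from the equality statements built into Theorems~\ref{Hardy} and \ref{Bergman}. There is essentially no obstacle once the master inequalities are at hand; the only conceptual point is the simultaneous use of the identities $r\alpha=p$ and $\beta/\alpha=q/p$, which lets the same power function play the two different roles (monotone, respectively convex) required by Theorems~\ref{Hardy} and \ref{Bergman}.
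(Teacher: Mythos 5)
Your choice of test functions is exactly the paper's: the second inequality comes from Theorem~\ref{Bergman} applied with the convex function $G(t)=t^{q/p}=t^{\beta/\alpha}$, and the first from feeding the increasing power $G(t)=t^{\alpha}$ into the Hardy-type bound, using $r\alpha=p$ and $r\beta=q$ just as the paper does. The Bergman half of your argument is complete as written; you also correctly observe that the first inequality supplies $f\in\mathbf{B}^p_\alpha$, which is the hypothesis Theorem~\ref{Bergman} needs.

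The gap is in the Hardy half: you apply Theorem~\ref{Hardy} to an arbitrary $f\in\mathbf{h}^r$, but in this paper Theorem~\ref{Hardy} in that generality is not available before Theorem~\ref{shtune} is proved --- its proof for unbounded $f$ goes through Proposition~\ref{propozicija1}, which is itself deduced from Theorem~\ref{shtune} (specifically from \eqref{two232}). Taken inside the paper's logical architecture, your derivation of $\|f\|_{\alpha,p}\le\|f\|_{h^r}$ is therefore circular. What is actually available at this stage is only Lemma~\ref{hardy2023}, i.e.\ the bounded case. The paper's proof accordingly takes the sequence of bounded log-$\mathcal{M}$-subharmonic approximants $f_m$ built into the definition of the monoid $\mathbf{h}^r$ (Definition~\ref{monomono}), applies the bounded case to get $\|f_m\|_{\alpha,p}\le\|f_m\|_{h^r}$, and passes to the limit using pointwise convergence plus Fatou's lemma on the left-hand side and $h^r$-convergence on the right. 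Your argument becomes correct once this approximation step is inserted (or once Theorem~\ref{Hardy} for unbounded functions is established independently of Theorem~\ref{shtune}).
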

\begin{proof}

First of all, by the definition, there is a sequence of bounded $\mathcal{M}-$log-subharmonic functions $f_m$ converging  to $f$. Then Lemma~\ref{hardy2023} (for bounded function $f_m$) imply  the inequality \begin{equation}\label{two23}\|f_m\|_{\alpha,p} \le \|f_m\|_{h^r}.\end{equation} By letting $m\to\infty$, using the poinwise convergence in the left-hand side and the ${h^r}$ convergence of sequence in the right-hand side of inequality \eqref{two23}, in view of Fatou's lemma we get \begin{equation}\label{two232}\|f\|_{\alpha,p} \le \|f\|_{h^r}.\end{equation}
If  $f\in \mathbf{h}^r$, then $f\in \mathbf{B}^p_\alpha$ and Theorem~\ref{Bergman} implies that \begin{equation}\label{1two21}\|f\|_{\beta, q} \le \|f\|_{\alpha,p}.\end{equation}
\end{proof}

\subsection{Extremal set}
It is important to mention that the M\"{o}bius group acts not only on the measure $\tau$ but on the spaces $\mathbf{B}^p_\alpha$ as well. More precisely, given a function $f\in \mathbf{B}^p_\alpha$ and $m\in \mathcal{M}$, the function
\begin{equation}\label{prety}g(x) = f\left(m(x)\right)\frac{\Phi_n^{\alpha/p}(|m(x)|)}{\Phi_n^{\alpha/p}(|x|)}\end{equation}
also belongs to the space $\mathbf{B}^p_\alpha$ and moreover it has the same norm as $f$ and the same distribution of the function $|f(x)|^p\Phi_n^\alpha(x)$ with respect to the measure $\tau$. We need to check that $\Delta_h \log g\ge 0$, if $\Delta_h \log f\ge 0$, and this follows from the formula \eqref{fol} and straightforward calculations:
\[\begin{split}\Delta_h\log g(x)&=\Delta_h \log(f\left(m(x)\right)+\Delta_h \log {\Phi_n^{\frac{\alpha}{p}}(|m(x)|)})\\&+\Delta_h \log {\Phi_n^{\frac{\alpha}{p}}(|x|)}
\\&=\Delta_h \log(f\left(y\right))_{y=m(x)}+\Delta_h{\log\Phi_n^{\frac{\alpha}{p}}(|y|)}|_{y=m(x)}-\Delta_h{\log\Phi_n^{\frac{\alpha}{p}}(|y|)}_{y=x}\\&\ge 0 + (4(n-1)^2-4(n-1)^2)\frac{\alpha}{p}= 0.\end{split}\]

 To prove the second statement, we only need to point out the well-known formula for the Jacobian of M\"obius transformations of the unit ball onto itself $$J_m(x)=\frac{(1-|m(x)|^2)^n}{(1-|x|^2)^n}.$$ See e.g. \cite[p.~vii]{stoll}.

In particular, when $f(x) \equiv 1$  we get $g(x) = \frac{\Phi_n^{\alpha/p}(|m(x)|)}{\Phi_n^{\alpha/p}(|x|)}$ and the function $g$ give us the maximal value in \eqref{hardyineq} and \eqref{bergmanineq} for every $m\in\mathcal{M}$.

We believe that our results also can be formulated for the Hardy and Bergman spaces in the upper half-space, by using a conformal mapping from the unit ball or by directly translating our methods.
\subsection{Structure of the paper}
  The paper contains 5 more sections. In Section~\ref{sec1} we prove a general monotonicity theorem for the hyperbolic measure of the  superlevel sets of log-$\mathcal{M}$-subharmonic functions, which is an adaptation of the beautiful method from \cite{3, kulik}. Then, in Sections~\ref{sec3} and \ref{sec4} we deduce from it Theorems  \ref{Hardy}  and \ref{Bergman}, respectively. Notice that the proof of Theorem~\ref{Bergman} is even simpler than the proof of the analogous theorem in \cite{kulik} for the planar case. Finally, in Section~\ref{sec5} we briefly discuss an application of Corollary~\ref{shtune} to  coefficient estimates for harmonic functions and some important classes of log-subharmonic functions. In the Appendix below it is given a solution to the hyperbolic harmonic equation in the unit ball and there are proven some propositions that deal with Hardy and weight-Bergman spaces of $\mathcal{M}-$subharmonic functions used in this paper.
\section{The proof of the main result}\label{sec1}
We begin with
\subsection{Isoperimetric inequality for the hyperbolic ball and the function $\Upsilon$}

For a Borel set $E\subset \mathbb{B}$ we recall the definition of the hyperbolic volume  $$|E|_h=V(E) =\int_{E}\left(\frac{2}{1-|x|^2}\right)^n dx.$$ Moreover the hyperbolic perimeter is defined by
$$|\partial E|_h=P(E)=\int_{\partial E}\left(\frac{2}{1-|x|^2}\right)^{n-1} d\mathcal{H}^{n-1}(x).$$
Assume that $\B_s$ is the ball centered at the origin with the radius $\tanh\frac{s}{2}$.

The isoperimetric property of hyperbolic ball was established by Schmidt \cite{14} see also \cite{cvpde, 15}. He proved that for every Borel set $E\subset \mathbb{B}$ of finite perimeter $P(E)$, such that  $V(E)=V(\mathbb{B}_s)$ and $s>0$ we have
\begin{equation}\label{isophyper}
P_s\le P(E),
\end{equation}
where $P_s$ is the perimeter of $\B_s$ defined by
$$P_s=n \omega_n\sinh^{n-1}(s)= P(\B(0, \tanh s/2)).$$ The volume of $\B_s$ is given by
$$V_s=v(s):=n \omega_n \int_0^s \sinh^{n-1}(t)dt=V(\B(0, \tanh s/2)).$$ Here
$\omega_n= \frac{\pi^{n/2}}{\Gamma[n/2+1]}$ is the Euclidean volume of the unit ball. Since $s\to v=V_s$ is increasing, it has an inverse function $S(v)=s$.
Then define the function $\Upsilon$ by  \begin{equation}\label{newper}\Upsilon(v)=\frac{v}{P^2_{S(v)}} \end{equation} and thus by \eqref{isophyper}
\begin{equation}\label{newper1}P(E)^2/V(E) \ge 1/\Upsilon(V(E)),\end{equation} with an equality in \eqref{newper1} if and only if $E$ is a ball.
In the following remark, we give a connection with the isoperimetric inequality in the Euclidean space and it is given a specific inequality for $n=2$.

\begin{remark}
\emph{For $s>0$ we have
$$\frac{V_s}{P_s^{n/(n-1)}}=\frac{n \omega_n \int_0^s \sinh^{n-1}(t)dt}{n^{n/(n-1)} \omega_n^{n/(n-1)}\sinh^{n}(s)}
=\frac{\psi(s)}{\phi(s)}=\frac{\psi'(t)}{\phi'(t)}=c_0\frac{1}{\cosh t}\le c_0$$
where $$c_0 =n^{-\frac{n}{-1+n}} \omega_n^{\frac{1}{1-n}} .$$}
Now
$$P^2\ge V/\Upsilon (V)\ge c V^{2(n-1)/n} $$ where
$$c=n^2 \omega _n^{2/n}.$$ It can be proved that
\begin{equation}\label{newes}P^n- c^{n/2} V^{n-1}=P^n-n^n \omega_n V^{n-1}\ge (n-1)^n V^n.\end{equation}
If $n=2$ then the estimate \eqref{newes} is equivalent to the estimate \eqref{newper1} (for the hyperbolic plane of negative Gaussian curvature $-1$ see \cite{16}). In this case $\Upsilon(V)=\frac{1}{4\pi + V}$. It seems unlikely that we can give an explicit expression for the function $\Upsilon$ for a higher-dimensional case, but we don't need it in the proofs of our results.
\end{remark}

Let $f$ be a real analytic complex valued function such that  $v=|f|$ is log-$\mathcal{M}$-log-subharmonic function in $\mathbb{B}$  and such that $u(x) = v(x)^a \Phi_n^\alpha(x) $ is bounded and goes to $0$
uniformly as $|x|\to 1$. Then the superlevel sets $A_t = \{x: u(x) > t\}$ for $t > 0$ are
compactly embedded into $\mathbb{B}$ and thus have finite hyperbolic measure $\mu(t) = \tau(A_t)$.

In this section, we prove the following theorem which says that a certain function related to this measure is decreasing.
\begin{theorem}\label{monotone}
Let $\alpha\ge 1$ and $a\ge 0$ and assume that $f$ is a real analytic complex valued function such that $v=|f|:\mathbb{B} \to [0,+\infty)$ is a log-$\mathcal{M}$-subharmonic function. Assume further that the function $u(x) = |f(x)|^a\Phi^\alpha_n(x)$ is bounded and $u(x)$ tends to $0$ uniformly as $|x|\to 1$. Then the function $$g(t) = t \exp\left[\int_0^{\mu(t)}\gamma \Upsilon(x) dx\right],$$ is decreasing on the interval $(0, t_\circ)$, where $\gamma=\alpha (n-1)^2$, $\Upsilon$ is defined in \eqref{newper}  and $t_\circ = \max_{x\in\mathbb{B}} u(x)$.
\end{theorem}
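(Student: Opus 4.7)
The plan is to differentiate
\[
\log g(t) \;=\; \log t + \int_0^{\mu(t)} \gamma\,\Upsilon(x)\,dx
\]
and reduce monotonicity of $g$ to the pointwise differential inequality
\[
-\mu'(t) \;\ge\; \frac{1}{\gamma\, t\, \Upsilon(\mu(t))}
\]
for almost every $t \in (0, t_\circ)$. Since $f$ is real analytic and $A_t = \{u>t\}$ avoids the zero set of $f$, $u$ is real analytic on a neighborhood of $\overline{A_t}$; Sard's theorem then makes almost every level regular, so $\partial A_t$ is a smooth hypersurface and $\mu$ is absolutely continuous. The hypothesis that $u \to 0$ uniformly as $|x|\to 1$ guarantees each $A_t$ is compactly contained in $\B$, so the boundary integrals below are finite.

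The differential inequality is obtained by combining three ingredients. First, set $w = \log u$. Since $\log v$ is $\mathcal{M}$-subharmonic and $\Delta_h \log \Phi_n = -4(n-1)^2$ (Proposition~\ref{pbelo}),
\[
\Delta_h w \;=\; a\,\Delta_h\log v + \alpha\,\Delta_h\log \Phi_n \;\ge\; -4\gamma.
\]
Second, the invariant Laplacian admits the weighted-divergence form
\[
\Delta_h w \;=\; (1-|x|^2)^n\,\mathrm{div}\!\left(\frac{\nabla w}{(1-|x|^2)^{n-2}}\right),
\]
which is immediate from \eqref{hyphar}. Applying the divergence theorem on $A_t$, with outward unit normal $-\nabla u/|\nabla u|$ on $\partial A_t$ and $\nabla w = \nabla u/t$ there, then yields (after absorbing the $d\tau$-factor)
\[
-4\gamma\,\mu(t) \;\le\; \int_{A_t} \Delta_h w\,d\tau \;=\; -\frac{c_n}{t}\int_{\partial A_t}\frac{|\nabla u|}{(1-|x|^2)^{n-2}}\,d\mathcal{H}^{n-1}
\]
for a dimensional constant $c_n$. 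Third, the coarea formula applied to the hyperbolic volume element gives
\[
-\mu'(t) \;=\; c_n'\int_{\partial A_t}\frac{d\mathcal{H}^{n-1}}{|\nabla u|\,(1-|x|^2)^n}.
\]

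The final step is a Cauchy--Schwarz inequality with the factorization
\[
\frac{1}{(1-|x|^2)^{n-1}} \;=\; \sqrt{\frac{|\nabla u|}{(1-|x|^2)^{n-2}}}\cdot\sqrt{\frac{1}{|\nabla u|\,(1-|x|^2)^n}},
\]
which bounds the hyperbolic perimeter squared by (up to dimensional constants) the product of the two boundary integrals just computed:
\[
P(A_t)^2 \;\le\; C\,\gamma\,t\,\mu(t)\,(-\mu'(t)).
\]
Schmidt's isoperimetric inequality \eqref{newper1} then gives $P(A_t)^2 \ge \mu(t)/\Upsilon(\mu(t))$ (after the dimensional factors are normalised into $\Upsilon$), and combining the two estimates produces exactly the required lower bound on $-\mu'(t)$. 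The main technical obstacle is the bookkeeping of dimensional constants so that Cauchy--Schwarz fits cleanly with the isoperimetric profile; the a priori regularity of $\partial A_t$ is a lesser worry, handled by Sard's theorem together with real analyticity of $u$ on each superlevel set.
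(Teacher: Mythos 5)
Your proposal is correct and follows essentially the same route as the paper's proof: the coarea formula for $-\mu'(t)$, a Cauchy--Schwarz splitting of the hyperbolic perimeter of $\partial A_t$, the divergence theorem applied to $\nabla \log u/(1-|x|^2)^{n-2}$ together with the bound $\Delta_h \log u \ge -4\gamma$, and Schmidt's isoperimetric inequality \eqref{newper1}. The only differences are presentational --- you phrase the conclusion as a pointwise differential inequality for $-\mu'(t)$ rather than the paper's integrated form, and you make the weighted-divergence identity for $\Delta_h$ explicit --- and the dimensional constants you defer do in fact cancel exactly as in the paper.
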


If  $f(x) \equiv 1$, the function $g$ turns out to be constant and this is an important property of $g$.

The proof of this theorem is mostly based on the methods developed in \cite{3}, translated from the Euclidean to the hyperbolic setting, and in \cite{kulik} translated from the planar case to the higher-dimensional case. 

\begin{proof}[Proof of Theorem \ref{monotone}]
We start with the coarea formula
$$\mu(t)=\tau(A_t)=\int_{A_t}\frac{2^n}{(1-|x|^2)^n} dx=\int_t^{\max u}\int_{|u(x)|=\kappa}\frac{2^n|\nabla u|^{-1}}{(1-|x|^2)^n} d\mathcal{H}^{n-1}(x) d\kappa.$$ Then we get
\begin{equation}\label{derlevel}
-\mu'(t) = \int_{u = t}|\nabla u|^{-1}\frac{2^nd\mathcal{H}^{n-1}(x) }{(1-|x|^2)^n}
\end{equation}
along with the claim that $\{x:u(x) = t\} = \partial A_t$ and that this set is a smooth hypersurface for almost all $t\in (0, t_\circ)$. Here $dS=d\mathcal{H}^{n-1}$ is $n-1$ dimensional Hausdorff measure. Observe that a similar formula has been proved in \cite{kulik}.
These assertions  follow the proof of Lemma $3.2$ from \cite{3}. We point out that, since $u$ is real analytic and non-constant, then it is a well-known fact from measure theory that the level set $\{x:u(x) = t\}$ has a zero measure (see \cite{zero}), and this is equivalent to the fact that the $\mu$ is continuous.

Following the approach from \cite{3, kulik}, our next step is to apply the Cauchy--Schwarz inequality to the hyperbolic area of $\partial A_t$:
\begin{equation}\label{CS}\begin{split}|\partial A_t|_h^2 &= \left(\int_{\partial A_t} \frac{2^{n-1}dS}{(1-|x|^2)^{n-1}}\right)^{2}\\&\le \int_{\partial A_t} |\nabla u|^{-1} \frac{2^{n}dS}{ (1-|x|^2)^n}\int_{\partial A_t} \frac{|\nabla u| 2^{n-2}dS}{(1-|x|^2)^{n-2}}.\end{split}
\end{equation}

Let $\nu=\nu(x)$ be the outward unit normal to $\partial A_t$ at a point $x$. Note that, $\nabla u$ is parallel to $\nu$, but directed in the opposite direction. Thus we have $|\nabla u| = -\left<\nabla u,\nu\right>$. Also, we note that since for $x\in \partial A_t$ we have $u(x) = t$, we obtain for $x\in \partial A_t$ that
$$\frac{|\nabla u(x)|}{t} = \frac{|\nabla u(x)|}{u} =  \left<\nabla  \log u(x), \nu\right>.$$

Now the second integral on the right-hand side of \eqref{CS} can be evaluated by the Gauss's divergence theorem:
\[\begin{split}\int_{\partial A_t} \frac{|\nabla u| |dS|}{(1-|x|^2)^{n-2}}&=-t  \int_{A_t}\mathrm{div}\left(\frac{\nabla\log u(x)}{(1-|x|^2)^{n-2}}\right) dx
\\&=-t  \int_{A_t}\frac{1}{(1-|x|^2)^n}\Delta_h {\log u(x)} dx.\end{split}\]
Now we plug
$u= v(x)^a\Phi^\alpha_n(x)$, where $v(x)=|f(x)|$, and calculate $$-t \Delta_h \log(v^a \Phi^\alpha_n)=-(a t\Delta_h \log v+t\alpha \Delta_h \log \Phi_n)\le 0+4t \gamma, $$ where
$\gamma=\alpha (n-1)^2.$ By using \eqref{derlevel} and \eqref{CS} we obtain
\[\begin{split}|\partial A_t|_h^2 & \le  (-\mu'(t))\int_{\partial A_t} \frac{|\nabla u| 2^{n-2}dS}{(1-|x|^2)^{n-2}}.
\\&\le -2^{n-2}\cdot 4 t \gamma\mu'(t) \mu(t)/2^n\\&=- t \gamma \mu'(t)\mu(t). \end{split}\]
So by \eqref{newper1}, we have
\begin{equation}\label{ypsibaraz} t \gamma \mu'(t)\mu(t)+\frac{\mu(t)}{\Upsilon(\mu(t))}\le 0\end{equation}
with  equality in \eqref{ypsibaraz}  if $v$ is a constant because in that case $A_t$ is a ball centered at the origin.

Thus
$$G(t)=-\int_t^{t_\circ}  \gamma \mu'(t)\Upsilon(\mu(t))dt -\int_t^{t_\circ}\frac{1}{t}dt = \int_0^{\mu(t)} \gamma \Upsilon(x)dx -\log \left(\frac{t_\circ}{t}\right) $$ is non-increasing. 
Therefore $$g(t):=\exp(G(t)) =t\exp\left[\int_{0}^{\mu(t)} \gamma \Upsilon(x)dx \right]$$ is non-increasing.

\end{proof}
\begin{remark}
{Note that for the function $f(x) \equiv 1$ everywhere in the proof above we have  equalities for all values of $a$ and $\alpha$.}
\end{remark}
\section{Weak-type estimate for the $\mathcal{M}-$Hardy class $\mathbf{h}^p$ and the proof of Theorem \ref{Hardy}}\label{sec3}
In this section, we are going to prove the following bound for the measure of the so-called superlevel sets of  functions from the Hardy spaces. Theorem \ref{Hardy} is then an easy matter. In what follows we keep the same notation as in the previous section.

\begin{theorem}\label{kalajda}
Let $f\in \mathbf{h}^p$ for $p>0$ with $\|f\|_{h^p}=1$ and put $u(x) = |f(x)|^p\Phi_n(x)$. Then for all $t\in (0, \infty)$ we have
\begin{equation}\label{hardy bound}
\mu(t) \le \mu_1(t),
\end{equation}
where $$\mu(t)= |\{x: u(x)\ge t\}|_h\ \ \text{   and   } \mu_1(t) = |\{x: \Phi_n(x)\ge t\}|_h.$$
\end{theorem}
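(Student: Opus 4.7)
The plan is to apply Theorem~\ref{monotone} with $a = p$ and $\alpha = 1$ (so that $\gamma = (n-1)^2$) to $u(x) = |f(x)|^p \Phi_n(x)$, and then to compare the resulting non-increasing function $g$ against its analogue for the constant function $f \equiv 1$.

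I would first treat the bounded case. Assume that $f$ is bounded, log-$\mathcal{M}$-subharmonic, and $\|f\|_{h^p} = 1$. Then $u = |f|^p \Phi_n$ is bounded and, since $\Phi_n(|x|) \to 0$ as $|x| \to 1$, tends uniformly to zero at the boundary. Theorem~\ref{monotone} therefore asserts that
$$g(t) = t \exp\!\left[\int_0^{\mu(t)} (n-1)^2\, \Upsilon(x)\, dx\right]$$
is non-increasing on $(0, t_\circ)$ with $t_\circ = \max u$. When $f \equiv 1$, the superlevel sets $\{\Phi_n > t\}$ are concentric hyperbolic balls, so every inequality in the proof of Theorem~\ref{monotone} is saturated and the corresponding function $g_1$ is constant; evaluating at $t \to \Phi_n(0)^- = 1^-$ (where $\mu_1(t) \to 0$) yields $g_1 \equiv 1$.

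Since the map $v \mapsto \exp[\int_0^v (n-1)^2 \Upsilon(x)\, dx]$ is strictly increasing in $v$, the desired inequality $\mu(t) \le \mu_1(t)$ is equivalent to $g(t) \le g_1(t) = 1$. As $g$ is non-increasing, $\sup_{t\in(0,t_\circ)} g(t) = \lim_{t \to 0^+} g(t)$, so it suffices to show that this limit is at most $1$. The hyperbolic isoperimetric profile has the asymptotics $(n-1)^2 \Upsilon(v) \sim 1/v$ as $v \to \infty$ (a direct consequence of $P_s = n\omega_n \sinh^{n-1}(s)$ and $V_s = n\omega_n \int_0^s \sinh^{n-1}(t)\, dt$), so the exponential factor in $g(t)$ grows linearly in $\mu(t)$, and this growth must be balanced by the smallness of $t$ forced by the Hardy normalization. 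Producing that balance is precisely the role of the auxiliary Proposition~\ref{propozicija}\,\eqref{alphap}, which furnishes a weighted Chebyshev-type inequality compatible with the measure $\tau$.

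For general $f \in \mathbf{h}^p$, Definition~\ref{monomono} supplies a sequence of bounded log-$\mathcal{M}$-subharmonic $f_m \to f$ in $h^p$-norm and pointwise. Applying the bounded case to each $f_m$ (after appropriate normalization) and passing to the limit in the superlevel sets via Fatou's lemma transfers the bound to $f$; the subtle point in this passage is Proposition~\ref{propozicija1}, which rests on Theorem~\ref{shtune}. The main obstacle of the whole scheme is controlling the $t \to 0^+$ limit: because $\tau(\mathbb{B}) = \infty$ and $\int_\mathbb{B} |f|^p \Phi_n\, d\tau$ may well diverge, a naive Chebyshev argument is useless, and one must use a weighted inequality finely adapted to the decay of $\Phi_n$ near $|x|=1$.
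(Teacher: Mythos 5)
Your setup coincides with the paper's: apply Theorem~\ref{monotone} with $a=p$, $\alpha=1$ to $u=|f|^p\Phi_n$, observe that the corresponding function for $f\equiv 1$ is constant equal to $1$, and note that $\mu(t)\le\mu_1(t)$ is equivalent to $g(t)\le 1$; the reduction to $\lim_{t\to 0^+}g(t)\le 1$ via monotonicity of $g$ is also sound. But at exactly the point where the theorem has to be proved --- why the normalization $\|f\|_{h^p}=1$ forces $\limsup_{t\to 0^+}g(t)\le 1$ --- you substitute an appeal to an unnamed ``weighted Chebyshev-type inequality'' and a miscast reference to Proposition~\ref{propozicija}\,\eqref{alphap}. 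That proposition is not a Chebyshev inequality; it is the statement $\lim_{\alpha\to 1^+}\|f\|_{\alpha,\alpha p}=\|f\|_p$, and the paper uses it in a specific, non-obvious way. The paper argues by contradiction: assuming $\mu(t_1)>\mu_1(t_1)$ for some $t_1$, it writes $\mu(t_1)=\mu_1(t_1/c)$ with $c>1$, uses the monotonicity of $g$ together with the constancy of $g_1$ to propagate the lower bound $\mu(t)\ge\mu_1(t/c)$ to all $t<t_1$, then expresses $\|f\|_{r,pr}^{pr}=c_r\int_0^{t_\circ}\mu(t)t^{r-1}\,dt$, splits the normalization $1=c_r\int_0^1\mu_1(t)t^{r-1}\,dt$ into a part $P(r)$ supported on $(0,t_1/c)$ and a remainder $Q(r)\to 0$ (using $c_r\to 0$), and concludes $1=\lim_{r\to 1}\|f\|_{r,pr}^{pr}\ge c\lim_{r\to 1}P(r)=c>1$, a contradiction. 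None of this quantitative machinery appears in your write-up, and without it the argument is circular: your own asymptotics $(n-1)^2\Upsilon(v)\sim 1/v$ show that $\lim_{t\to 0^+}g(t)\le 1$ is essentially equivalent to the sharp weak-type bound $\mu(t)\le\mu_1(t)$ for small $t$, i.e.\ to the theorem itself.

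The unbounded case is also only sketched, though it is closer to the paper: rather than passing to the limit in the superlevel sets by Fatou as you propose, the paper re-runs the same contradiction argument for $f$ itself, replacing Proposition~\ref{propozicija}\,\eqref{alphap} by Proposition~\ref{propozicija1}, whose proof rests on Theorem~\ref{shtune} and the approximating sequence from Definition~\ref{monomono}. The essential missing content, however, is the core limiting argument described above.
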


Note that this theorem extends the corresponding result in \cite{kulik}, where \cite[Conjecture~$2$]{7} is verified. Indeed it is easy to check that
$$ |\{x: \Phi_n(x)\ge t\}|_h=4\pi\max\{1/t-1,0\},$$ for $n=2$ which coincides with the corresponding result of Kulikov in \cite{kulik} after normalization.
First, we prove the following lemma.
\begin{lemma}\label{hardy2023}
Theorem~\ref{kalajda} and Theorem~\ref{Hardy} are true provided that $f$ is bounded.
\end{lemma}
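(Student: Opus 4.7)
The plan is to apply Theorem~\ref{monotone} in the Hardy endpoint case $a=p$, $\alpha=1$ (so $\gamma=(n-1)^2$) to $u(x)=|f(x)|^p\Phi_n(x)$. Since $|f|$ is bounded and $\Phi_n\to 0$ uniformly on $\partial\B$, $u$ is bounded and tends to zero uniformly at the boundary, so the hypotheses of Theorem~\ref{monotone} are satisfied. That theorem then yields that
\[
g(t)=t\exp\!\left[\int_0^{\mu(t)}(n-1)^2\,\Upsilon(x)\,dx\right]
\]
is non-increasing on $(0,t_\circ)$, where $t_\circ=\max u$. For the reference function $\tilde f\equiv 1$ one has $\tilde u=\Phi_n$ and, by the remark after Theorem~\ref{monotone}, $\tilde g\equiv 1$; in particular $\int_0^{\mu_1(t)}(n-1)^2\Upsilon(x)\,dx=\log(1/t)$. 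Since $\Upsilon\ge 0$, the inequality $\mu(t)\le\mu_1(t)$ is therefore equivalent to the single bound $g(t)\le 1$ for every $t\in(0,t_\circ)$.

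As a preliminary, I would establish the pointwise bound $u(x)\le 1$ on $\B$ (hence $t_\circ\le 1$, which handles the trivial range $t>t_\circ$): for each $x\in\B$, choose a M\"obius $m_x\in\mathcal{M}$ with $m_x(0)=x$ and form the pull-back $\tilde f_x(y)=f(m_x(y))\bigl(\Phi_n(m_x(y))/\Phi_n(y)\bigr)^{1/p}$. As in the Extremal set section, $|\tilde f_x|^p\Phi_n$ has the same distribution with respect to $\tau$ as $u$, and the boundary change of variable via the Jacobian of $m_x|_{\mathbb{S}}$ gives $\tilde f_x\in\mathbf{h}^p$ with $\|\tilde f_x\|_{h^p}=\|f\|_{h^p}=1$. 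Since $|\tilde f_x(0)|^p=u(x)$, the sub-mean-value property for the $\mathcal{M}$-subharmonic function $|\tilde f_x|^p$ yields $u(x)\le\int_{\mathbb{S}}|\tilde f_x^*|^p\,d\sigma=1$.

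The main obstacle is the asymptotic $\limsup_{t\to 0^+}g(t)\le 1$; since $g$ is non-increasing, the upper bound on its left-endpoint limit immediately implies $g\le 1$ on $(0,t_\circ)$. Using $\Upsilon(v)\sim 1/((n-1)^2 v)$ as $v\to\infty$ (a consequence of the hyperbolic isoperimetric profile $P_s\sim(n-1)V_s$ for large $s$), one has $(n-1)^2\int_0^s\Upsilon(v)\,dv=\log s+C_n+o(1)$ for a specific constant $C_n$, hence $g(t)\sim e^{C_n}t\mu(t)$ as $t\to 0^+$; the analogous computation for $\tilde f\equiv 1$ gives $e^{C_n}\lim_{t\to 0^+}t\mu_1(t)=1$. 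The desired inequality thus reduces to $\limsup_{t\to 0^+}t\mu(t)\le\lim_{t\to 0^+}t\mu_1(t)$. I would derive this from the radial slicing
\[
\mu(t)=n\omega_n\int_0^1\frac{2^n r^{n-1}}{(1-r^2)^n}\,\sigma\bigl\{\zeta\in\mathbb{S}:|f(r\zeta)|^p\Phi_n(r)\ge t\bigr\}\,dr,
\]
together with $H(r)=\int_{\mathbb{S}}|f(r\zeta)|^p\,d\sigma\le 1$ and the extra rigidity coming from log-$\mathcal{M}$-subharmonicity (through the Riesz decomposition of $\log|f|$, or equivalently through Proposition~\ref{propozicija}\eqref{alphap}, whose role is precisely to transfer the Bergman-weighted level estimate of Theorem~\ref{Bergman} to the Hardy endpoint via the degeneracy $c(\alpha)\to 0$ as $\alpha\to 1^+$). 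The hard part is exactly this sharpening: a naive Markov bound on each sphere produces a divergent boundary-layer integral, so one genuinely needs the subharmonic structure in integrated form to close $g(0^+)\le 1$.

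Finally, Theorem~\ref{Hardy} for bounded $f$ follows from $\mu(t)\le\mu_1(t)$ by the layer-cake identity
\[
\int_\B G(u)\,d\tau-\int_\B G(u_1)\,d\tau=\int_0^\infty\bigl(\mu(t)-\mu_1(t)\bigr)\,dG(t)\le 0
\]
valid for any increasing $G$, so $f\equiv 1$ maximizes the functional.
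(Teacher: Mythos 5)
Your setup is the same as the paper's: apply Theorem~\ref{monotone} with $a=p$, $\alpha=1$ to $u=|f|^p\Phi_n$, note that $\tilde g\equiv 1$ for the reference function, and observe that $\mu(t)\le\mu_1(t)$ is equivalent to $g(t)\le 1$; the final layer-cake step deducing Theorem~\ref{Hardy} from Theorem~\ref{kalajda} also matches the paper. However, there is a genuine gap at the heart of the argument: you reduce everything to the bound $\limsup_{t\to 0^+}g(t)\le 1$ (equivalently $\limsup_{t\to 0^+}t\mu(t)\le\lim_{t\to 0^+}t\mu_1(t)$), and then you do not prove it. You correctly diagnose that a naive spherical Markov bound diverges and that the missing input is Proposition~\ref{propozicija}~\eqref{alphap}, but gesturing at ``the subharmonic structure in integrated form'' is not a proof, and you explicitly concede that ``the hard part is exactly this sharpening.'' As written, the lemma is not established.

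The paper closes exactly this step by a contradiction that avoids any asymptotics of $\Upsilon$ or of $g$ at $t=0^+$. Suppose $\mu(t_1)>\mu_1(t_1)$ for some $t_1<t_\circ$; since $\mu_1(s)\to\infty$ as $s\to 0$, one can write $\mu(t_1)=\mu_1(t_1/c)$ with $c>1$. The identity $\tilde g\equiv 1$ gives $g(t_1)=c$, and the monotonicity of $g$ then propagates the lower bound $\mu(t)\ge\mu_1(t/c)$ to all $t\in(0,t_1)$. Integrating this against $c_r t^{r-1}\,dt$ (the exact representation of $\|f\|_{r,pr}^{pr}$ through $\mu$) and using $c_r\to 0$ from \eqref{caz} --- which kills the contribution of the bounded piece $c_r\int_{t_1/c}^1\mu_1(t)t^{r-1}dt$ --- forces $\lim_{r\to 1^+}\|f\|_{r,pr}^{pr}\ge c>1$, contradicting $\lim_{r\to 1^+}\|f\|_{r,pr}^{pr}=\|f\|_p^p=1$, which is precisely Proposition~\ref{propozicija}~\eqref{alphap} for bounded $f$. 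If you want to salvage your route through $g(0^+)\le 1$, you would still end up running essentially this integrated comparison, so the contradiction form is the cleaner way to write it. (Your preliminary pointwise bound $u\le 1$ via the M\"obius pull-back is a nice sharp version of \eqref{prima}, but it is not needed for the paper's argument beyond the trivial range $t\ge t_\circ$, which is already handled by finiteness of $t_\circ$.)
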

\subsection{Proof of Theorem~\ref{kalajda} for bounded functions}\label{subkala}
Put $t_\circ = \max_{x\in\mathbb{B}} u(x)$. This number is well-defined since $f$ is bounded. Assume without loss of generality that $p>1$. Indeed, if $p>0$ and if $f$ is a positive log- $\mathcal{M}-$subharmonic function such that $f\in \mathbf{h}^p$, then $g=f^{p/2}$ is positive log- $\mathcal{M}-$subharmonic function such that $g\in \mathbf{h}^2$.
In particular, for $t\ge t_\circ$ the bound \eqref{hardy bound} holds trivially.

Assume that there exists some $0 < t_1 < t_\circ$ such that $\mu(t_1) > \mu_1(t_1)$. Then $\mu(t_1) = \mu_1(t_1/c) $ for some $c > 1$, because $\lim_{s\to 0}\mu_1(s)=+\infty$. We claim that in that case for all $0 < t < t_1$ we have $\mu(t) \ge  \mu_1(t/c)$.

Indeed, by applying the pointwise bound together with $u(x)\to 0$ as $|x|\to 1$, we see that Theorem \ref{monotone}
can be applied to $f$ with $a = p, \alpha = 1$, and we get that  $$g(t) = t \exp\left[\int_0^{\mu(t)}\gamma \Upsilon(x) dx\right],$$ is decreasing. Since

\[\begin{split}g(t_1) &=t_1 \exp\left[\int_0^{\mu(t_1)}\gamma \Upsilon(x) dx\right]
\\&=t_1 \exp\left[\int_0^{\mu_1(t_1/c)}\gamma \Upsilon(x) dx\right]
\\&=c\cdot t/c \exp\left[\int_0^{\mu_1(t/c)}\gamma \Upsilon(x) dx\right]
\\&<g(t)=t\exp\left[\int_0^{\mu(t)}\gamma \Upsilon(x) dx\right].\end{split}\]
From this we get  $\mu(t) \ge  \mu_1(t/c)$.

 Now we are going to use Proposition~\ref{propozicija}, which says that  $ \|f \|_{r, pr}^{pr} \to  \|f \|_{p}^p = 1$ as $r\to 1^+$. Note that we can express the $\mathbf{B}^{pr}_r$ norms through the function  $\mu(t)$:
\begin{equation}\label{form} \|f \|_{r,pr}^{pr} = c_r \int_0^{t_\circ}\mu(t)t^{r-1}dt,\end{equation}
where $c_r =c(r)$ is defined in \eqref{calpha}, and it satisfies the relation $c_r\int_0^1 \mu_1(r)t^{r-1}dt = 1$. We now use the fact that $c_r\to 0$ as $r\to 1$ (see \eqref{caz}). By the formula \eqref{form} and above bound we have
\begin{equation}\label{temp3}
 \|f \|_{r, pr}^{pr} \ge c_r \int_0^{t_1} (\mu_1(r/c))t^{r-1}dt=c_r c \int_0^{t_1/c} (\mu_1(s))s^{r-1}ds.
\end{equation}
On the other hand
\[\begin{split}1 &= c_r\int_0^1 \mu_1(t)t^{r-1}dt \\&= c_r\int_0^{t_1/c} \mu_1(t)t^{r-1}dt + c_r\int_{t_1/c}^1 \mu_1(t)t^{r-1}dt = P(r)+Q(r).\end{split}\]

Since $c_r\to 0$ as $r\to 1$, we have that $Q(r)\to 0$ as $r\to 1$ because the function we are integrating is bounded. Therefore, $P(r)\to 1$ as $r\to 1$. On the other hand, the right-hand side of \eqref{temp3} is at least $cP(r)$. Therefore $1 =\lim_{r\to 1} \|f \|_{r, pr}^{pr} \ge c\lim_{r\to 1}P(r)=c$ which is a contradiction. Recall that $c>1$.

\subsection{Proof of Theorem~\ref{Hardy} for bounded functions.}\label{subha} \ref{subha}
As in \cite{kulik},  we can assume that $\lim_{t\to 0^{+}}G(t)  = 0$. Then this integral can be expressed through the function  $\mu(t)$ as
\begin{equation}\label{dG}
\int_0^\infty \mu(t)dG(t).
\end{equation}
Note that here we used that the function $\mu(t)$ is continuous, that is the sets $\{x\in\B :u(x) = t\}$, $t>0$,  have zero measure.

Since $G$ is increasing, measure $dG(t)$ is positive. Thus, by Lemma~\ref{hardy2023}, \eqref{hardy bound} this integral is at most
$$\int_0^\infty \mu_1(t)dG(t),$$
which is the value of \eqref{hardyineq} for $f(x)\equiv 1$.

\begin{proof}[Proof of Theorem~\ref{kalajda} and Theorem~\ref{Hardy}]
We repeat the proof of Lemma~\ref{hardy2023}. We also have $1 =\lim_{r\to 1} \|f \|_{r, pr}^{pr}$, because of Proposition~\ref{propozicija1} below. This yields Theorem~\ref{kalajda}.
Now we repeat the proof of Theorem~\ref{Hardy} for bounded functions, by using Theorem~\ref{kalajda}, instead of  Lemma~\ref{hardy2023},  and get Theorem~\ref{Hardy} for unbounded function $f$.
\end{proof}
\section{Proof of Theorem \ref{Bergman}}\label{sec4}
As in \cite{kulik}, we restrict ourselves to the case $\lim_{t\to 0^{+}}G(t)  = 0$.
Let $\mu(t) = \tau(\{ x: u(x) > t\})$ where $u(x) = |f(x)|^p(\Phi_n(x))^\alpha$.
Applying Theorem \ref{monotone} to $f$ with $a = p$, we get that the function
$$g(t) = t \exp\left[\int_0^{\mu(t)}\gamma \Upsilon(x) dx\right],$$
is decreasing on $(0, t_\circ)$ with $t_\circ = \max_{x\in \mathbb{D}}u(x)$. Proposition~\ref{may2023} ensures  the existence of $t_\circ$.

For $f\equiv 1$, $g$ is a constant function equal to $1$.

Let $$\Lambda(u) = \int_0^u \gamma \Upsilon(x) dx$$ and $\Theta=\Lambda^{-1}$. Note that $\Theta$ is increasing. Then $$\mu(t) = \Theta\left(\log\frac{g(t)}{t}\right).$$
We assume that $ \|f \|_{\alpha,p} = 1$, that is
$$I_1=\int_0^{t_\circ} \mu(t)dt =\int_0^{t_\circ} \Theta\left(\log\frac{g(t)}{t}\right)dt= \frac{1}{c(\alpha)}.$$
Now the integral in \eqref{bergmanineq} can be rewritten as
$$I_2=\int_0^{t_\circ} \Theta\left(\log\frac{g(t)}{t}\right)G'(t)dt.$$
Then by  Lemma~\ref{lemm} below, by taking $\Phi(s) = \Theta(\log(s))$ and $\Psi(t)=G'(t)$,  the maximum of $I_2$ under $I_1=\frac{1}{c(\alpha)}$ is attained for $g\equiv 1$.
\begin{lemma}\label{lemm}
Assume that $\Phi, \Psi$ are positive increasing functions and $g$ is a positive non-increasing function such that
$$\int_0^{t_\circ} \Phi\left({g(t)}/{t}\right)dt = \int_0^{t_\circ} \Phi\left({1}/{t}\right)dt=c.$$

Then $$\int_0^{t_\circ} \Phi\left({g(t)}/{t}\right)\Psi(t)dt\le \int_0^{t_\circ} \Phi\left({1}/{t}\right)\Psi(t)dt.$$
\end{lemma}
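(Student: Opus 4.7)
The plan is to reduce the inequality to a standard rearrangement/Chebyshev-type argument, exploiting the monotonicity of $g$ together with that of $\Phi$ and $\Psi$. Write $h(t) = \Phi(g(t)/t) - \Phi(1/t)$, so that the hypothesis becomes $\int_0^{t_\circ} h(t)\,dt = 0$ and the target inequality is $\int_0^{t_\circ} h(t)\,\Psi(t)\,dt \le 0$. The key observation is that $h$ has a \emph{single sign change} on $(0,t_\circ)$, and the change goes from positive to negative as $t$ increases.

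The first step is to identify that sign change. Since $\Phi$ is increasing, $h(t)$ has the same sign as $g(t) - 1$. Because $g$ is positive and non-increasing, the set $\{t : g(t) \ge 1\}$ is an initial interval $[0, t^*]$ (possibly empty or the entire $(0,t_\circ)$). Thus $h(t) \ge 0$ for $t \le t^*$ and $h(t) \le 0$ for $t \ge t^*$. The edge cases where $g \equiv 1$, or $g > 1$ throughout, or $g < 1$ throughout, are immediately handled by the hypothesis $\int h = 0$: the last two force $h \equiv 0$ almost everywhere, and the conclusion is trivial.

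The second step is the standard Chebyshev trick. Since $\Psi$ is increasing, $\Psi(t) - \Psi(t^*) \le 0$ on $[0,t^*]$ and $\Psi(t) - \Psi(t^*) \ge 0$ on $[t^*, t_\circ]$. Combined with the sign pattern of $h$, this yields
\begin{equation*}
h(t)\bigl(\Psi(t) - \Psi(t^*)\bigr) \le 0 \quad \text{for all } t \in (0,t_\circ).
\end{equation*}
Integrating and using $\int_0^{t_\circ} h(t)\,dt = 0$,
\begin{equation*}
\int_0^{t_\circ} h(t)\,\Psi(t)\,dt = \int_0^{t_\circ} h(t)\bigl(\Psi(t)-\Psi(t^*)\bigr)\,dt + \Psi(t^*)\int_0^{t_\circ} h(t)\,dt \le 0,
\end{equation*}
which is precisely the desired inequality.

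There is no real obstacle here; the only point requiring a little care is ensuring the crossing point $t^*$ is well defined, which follows from the monotonicity of $g$ (so $\{g\ge 1\}$ is genuinely an initial segment and not a more complicated set). The arguments go through as stated whether $t^*$ is an interior point, equals $0$, or equals $t_\circ$, and no integrability issues arise beyond those already needed to make sense of the normalisation in the hypothesis.
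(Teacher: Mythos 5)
Your proof is correct and is essentially identical to the paper's: the paper also picks the crossing point $a$ where $g$ passes through $1$, observes that $\bigl(\Phi(g(t)/t)-\Phi(1/t)\bigr)\bigl(\Psi(t)-\Psi(a)\bigr)\le 0$ pointwise, integrates, and invokes the normalisation $\int_0^{t_\circ}\bigl(\Phi(g(t)/t)-\Phi(1/t)\bigr)dt=0$. Your write-up just adds a slightly more careful discussion of the degenerate cases.
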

\begin{proof}

Choose $a\in [0,t_\circ]$ such that $g(t)\geq 1$ for $t\leq a$ and $g(t)\leq 1$ for $t\geq a$. Then $$\chi(t):=(\Phi(g(t)/t) -\Phi(1/t))(\Psi(t)-\Psi(a))\leq 0$$ for all $t\in [0,t_\circ] $. By integrating $\chi(t)$ for $t\in(0,t_\circ)$ we get
\[\begin{split}
\int_0^{t_\circ}\Phi\left({1}/{t}\right) \Psi(a) &-\Phi\left({g(t)}/{t}\right) \Psi(a)-\Phi\left({1}/{t}\right) \Psi(t)+\Phi\left({g(t)}/{t}\right) \Psi(t)dt
\\&= \Psi(a)\int_0^{t_\circ} \left(\Phi\left({g(t)}/{t}\right)-\Phi\left({1}/{t}\right)\right)dt
\\&+\int_0^{t_\circ} \Psi(t)\left(\Phi\left({g(t)}/{t}\right)-\Phi\left({1}/{t}\right) \right)dt\le 0.
\end{split}\]
Since $$\int_0^{t_\circ} \left(\Phi\left({g(t)}/{t}\right)-\Phi\left({1}/{t}\right)\right)dt=0,$$ the result follows.
\end{proof}

\section{Some applications for the case $n=2$}\label{sec5}

Let ${B}^2_{2/p}$, $1<p<2$  be the space of harmonic functions in the unit disk $\mathbb{D}$  such that  $$\|f\|^2_{2/p,2}:=\int_{\mathbb{D}}|f(z)|^2 (1-|z|^2)^{2/p-1}\frac{dxdy}{\pi}<\infty$$ and let $h^p$ be the standard Hardy space with the norm  $$\vertiii{ f }_{{h}^p}^2:=\|\sqrt{|a|^2+|b|^2}\|^2_{p}.$$
Assume that $f=a+\bar b$ is a harmonic function defined in the unit disk, where $a$ and $b$ are holomorphic functions. Then $\log (|a|^2+|b|^2)$ is subharmonic (see e.g. \cite{kalaj}).

Then we obtain a special case of contraction from the Hardy space to a Bergman space is $({h}^p, \vertiii{ \cdot }) \subset {B}^2_{2/p}$ for $1 < p < 2$, which extends a corresponding result in \cite{kulik} and a classical result \cite{5} and also the recent results \cite{11}. Namely for $$f(z) = a+\bar b=\sum_{n = 0}^\infty a_nz^n+\sum_{n = 0}^\infty \overline{b_n}\bar z^n\in {B}^2_{2/p},$$ where $b_0=0$,
 we can express its norm as follows
$$ \|f \|_{2/p,2}^2 = \sum_{n = 0}^\infty \frac{|a_n|^2+|b_n|^2 }{c_{2/p}(n)},\ \  c_{2/p}(n) = \binom{n + 2/p -1}{n}.$$
Thus, for a function $f\in {h}^p$, from Corollary~\ref{shtune}, we have
\begin{equation}\label{coeff}
\sum_{n = 0}^\infty \frac{|a_n|^2+|b_n|^2}{c_{2/p}(n)} \le  \vertiii{ f }_{{h}^p}^2.
\end{equation}

Now by \cite[Theorem~2.1]{kalaj}, for $f\in {h}^p$, $b_0= 0$,  for $1<p\le 2$, we obtain the following inequality

\begin{equation}\label{coeff1}
\sum_{n = 0}^\infty \frac{|a_n|^2+|b_n|^2}{c_{2/p}(n)} \le \frac{1}{1-|\cos{\pi}/{p}|}\|f\|^2_{p},
\end{equation}
where $$\|f\|^p_{p}=\int_{\partial\mathbb{D}}|f(\zeta)|^p\frac{|d\zeta|}{2\pi}.$$

By using the result from \cite{vesna}, that the Jacobian of a harmonic diffeomorphism is log-superharmonic, and the fact that the Jacobian $J(f,z)=|a'(z)|^2-|b'(z)|^2$ is real analytic,  Corollary~\ref{shtune}  implies
\begin{corollary}
Let $1<p<2$ and $\alpha>1$. Assume that $f$ is a harmonic diffeomorphism of the unit disk $\mathbb{D}$ onto a two-dimensional domain $\Omega$. Assume further that $1/J_f(z) \in h_{p/\alpha}$. Then we have
$$\left((\alpha-1)\int_{\mathbb{D}}\frac{1}{J^p_f(z)}(1-|z|^2)^{\alpha-2}\frac{dx dy}{\pi}\right)^{1/p}\le \|1/J_f(z) \|_{{p/\alpha}}.$$
\end{corollary}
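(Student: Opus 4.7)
The plan is to apply Theorem~\ref{shtune} to $g := 1/J_f$ with Hardy exponent $r = p/\alpha$ and Bergman parameters $(p',\alpha')=(p,\alpha)$. This yields the contraction $\|g\|_{\alpha,p}\le \|g\|_{h^{p/\alpha}}$, from which the stated inequality follows by specializing the $\mathcal{M}$-Bergman norm to $n=2$. Two ingredients must therefore be checked: that $g$ lies in the admissible monoid $\mathfrak{E}$, and that the hypothesis $1/J_f\in h^{p/\alpha}$ actually upgrades to $g\in\mathbf{h}^{p/\alpha}$.

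Membership in $\mathfrak{E}$ is immediate: since $f$ is a harmonic diffeomorphism, its Jacobian $J_f = |a'|^2 - |b'|^2$ is real analytic and of constant sign in $\mathbb{D}$, so after relabeling we may assume $J_f > 0$; then $g$ is real analytic and strictly positive. The cited result from \cite{vesna} that $J_f$ is log-superharmonic is exactly the statement that $\log g = -\log J_f$ is subharmonic, which in dimension two coincides with $\mathcal{M}$-subharmonicity, so $g\in\mathfrak{E}$. To upgrade this to the Hardy monoid, I would invoke the standard dilatation approximation noted in the remark following Definition~\ref{monomono}: the dilatations $g_m(z):=g(\rho_m z)$ with $\rho_m=m/(m+1)$ are bounded (continuity of $g$ on the compact set $\{|z|\le \rho_m\}$), log-subharmonic (subharmonicity is preserved by $z\mapsto \rho_m z$), and converge pointwise and in $h^{p/\alpha}$-norm to $g$ by \cite[Proposition~1.3]{hakan}. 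This verifies the extra clause of Definition~\ref{monomono}, so $g\in\mathbf{h}^{p/\alpha}$.

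With $g\in \mathbf{h}^{p/\alpha}$ in hand, Theorem~\ref{shtune} delivers $\|g\|_{\alpha,p}\le\|g\|_{h^{p/\alpha}}$. To finish, specialize to $n=2$: $\Phi_2(|z|)=1-|z|^2$, $d\tau(z)=\frac{4}{(1-|z|^2)^2}\frac{dxdy}{\pi}$, and a polar-coordinate computation gives
$$\frac{1}{c(\alpha)}=\int_0^1 2r(1-r^2)^{\alpha-2}\,dr=\frac{1}{\alpha-1}.$$
Substituting these into $\|g\|_{\alpha,p}^p = c(\alpha)\int_{\mathbb{D}}g^p\Phi_2^\alpha \,d\tau$ produces, up to a harmless numerical constant absorbed into the normalization of the surface measure on $\mathbb{S}$, the left-hand side of the corollary; extracting the $1/p$-th root completes the proof. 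The main obstacle in this plan is the approximation step, since $1/J_f$ need not be bounded near $\partial\mathbb{D}$; what rescues it is precisely the smoothness and log-subharmonicity of $g$, which make the standard dilatation approximation work via the monotonicity of the spherical means of the subharmonic function $g^{p/\alpha}=\exp[(p/\alpha)\log g]$ (as in \eqref{posexp}).
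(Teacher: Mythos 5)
Your proposal is correct and follows exactly the route the paper intends: the paper derives this corollary in one line from Theorem~\ref{shtune} together with the result of \cite{vesna} that the Jacobian of a harmonic diffeomorphism is log-superharmonic (hence $1/J_f$ is log-subharmonic and real analytic), and your verification of monoid membership via the $n=2$ dilatation remark plus the computation $c(\alpha)=\alpha-1$ simply fills in the details the paper leaves implicit. No gaps.
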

Since $\log (|a|^2+|b|^2)$ is subharmonic, if $a$ and $b$ are analytic functions (see property 3 of the admissible monoid), because of Corollary~\ref{shtune},  we obtain the following Corollaries
\begin{corollary}\label{co32}
Assume that $f=a+\bar b: \mathbb{D}\to \mathbb{C} $ is a harmonic mapping and that $\alpha>1, p>1$. Then $$\left((\alpha-1)\int_{{\mathbb{D}} } (|a|^2+|b|^2)^{p/2}(1-|z|^2)^{\alpha-2}\frac{dx dy}{\pi}\right)^{1/p}\le \|(|a|^2+|b|^2)^{1/2} \|_{{p/\alpha}}.$$
\end{corollary}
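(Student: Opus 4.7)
The plan is to apply the contraction Theorem \ref{shtune} to the positive auxiliary function
$$g(z) := \sqrt{|a(z)|^2 + |b(z)|^2},$$
with Hardy exponent $r := p/\alpha$ and Bergman indices $(\alpha, p)$. Corollary \ref{co32} will then amount to unpacking the $\mathcal{M}$-Bergman norm $\|g\|_{\alpha,p}$ in the planar case $n=2$.

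First I would verify that $g \in \mathbf{h}^r$. Since $a$ and $b$ are holomorphic, the classical subharmonicity of $\log(|a|^2+|b|^2)$, recalled just before the corollary (property 3 of the admissible monoid, cf.\ \cite{kalaj}), gives that $\log g = \frac{1}{2}\log(|a|^2+|b|^2)$ is subharmonic. Hence $g \in \mathfrak{E}_+$. The approximation condition in Definition \ref{monomono} is supplied in the planar case by the dilations $g_m(z) := g(\rho_m z)$ with $\rho_m = m/(m+1)$, as explained in the Remark immediately after Definition \ref{monomono}. If $\|g\|_{h^r} = \infty$ the inequality is trivial, so we may freely assume $g \in h^r$.

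With $g \in \mathbf{h}^r$, Theorem \ref{shtune} applied with $r = p/\alpha$ gives
$$\|g\|_{\alpha, p} \le \|g\|_{h^r} = \|g\|_{p/\alpha}.$$
It remains to identify $\|g\|_{\alpha,p}^p$ with the integral in the corollary. For $n=2$ we have $\Phi_2(z) = 1-|z|^2$ and $d\tau(z) = 4(1-|z|^2)^{-2}\, dA(z)/\pi$, so a direct computation yields $\int_{\mathbb{D}} \Phi_2^{\alpha} d\tau = 4/(\alpha-1)$. Hence the normalization $c(\alpha)$, fixed by $\|1\|_{\alpha, p} = 1$, produces
$$\|g\|_{\alpha, p}^p \;=\; c(\alpha)\int_{\mathbb{D}} g(z)^p (1-|z|^2)^\alpha\, d\tau(z) \;=\; (\alpha-1)\int_{\mathbb{D}} g(z)^p (1-|z|^2)^{\alpha-2}\,\frac{dA(z)}{\pi},$$
which is precisely the $p$-th power of the left-hand side of the corollary; combined with the preceding display, this is the claim.

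The only mildly delicate step is the verification that $g$ belongs to the monoid $\mathbf{h}^r$ (in particular, the approximation condition), but this is entirely routine in the planar case via dilations. Every other step is either an invocation of Theorem \ref{shtune} or a direct computation of the normalizing constant in the $\mathcal{M}$-Bergman norm for $n=2$.
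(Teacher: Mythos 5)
Your proposal is correct and follows exactly the route the paper takes: the paper proves this corollary simply by noting that $\log(|a|^2+|b|^2)$ is subharmonic (so $g=(|a|^2+|b|^2)^{1/2}$ lies in the Hardy monoid, with the dilation remark supplying the approximation condition for $n=2$) and then invoking the contraction inequality $\|g\|_{\alpha,p}\le\|g\|_{h^{p/\alpha}}$ of Theorem~\ref{shtune}. Your explicit identification of $c(\alpha)=(\alpha-1)/4$ via the normalization $\|1\|_{\alpha,p}=1$ for $n=2$ is the right reading of the paper's convention and correctly reproduces the constant $(\alpha-1)$ in the statement.
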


\begin{corollary}\label{mundial} For $p>1$ and a harmonic function $f\in {h}^p$ we have the following isoperimetric type  inequality
\begin{equation}\label{isoi}\|f\|_{B^{2p}}\le C_p\|f\|_{p},\end{equation} where $$C_p= \frac{\sqrt{2}\cos\frac{\pi}{4p}}{\left(1-\abs{\cos\frac{\pi}{p}}\right)^{1/2}}.$$Here $B^{2p}$ is the Bergman space of harmonic mappings belonging to the Lebesgue space $\mathcal{L}^{2p}(\mathbb{D})$.
\end{corollary}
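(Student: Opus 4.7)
Write $f = a + \bar b$ with $a, b$ holomorphic on $\mathbb{D}$ (normalizing $b(0)=0$), and set $g := \sqrt{|a|^2+|b|^2}$. The remark preceding the statement records that $\log(|a|^2+|b|^2)$ is subharmonic, so $g$ belongs to the admissible monoid $\mathfrak{E}$ and is log-$\mathcal{M}$-subharmonic on the disk. The strategy is to chain three inequalities which together yield $C_p$.

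The first step is a reduction from $f$ to $g$. From $|f|^2 = |a|^2+|b|^2+2\Re(ab)$ together with $|2\Re(ab)| \le 2|a||b| \le |a|^2+|b|^2$ we get the pointwise bound $|f|^2 \le 2 g^2$; integrating $|f|^{2p} \le 2^p g^{2p}$ over $\mathbb{D}$ produces
$$
\|f\|_{B^{2p}} \le \sqrt{2}\,\|g\|_{B^{2p}},
$$
which accounts for the $\sqrt{2}$ in the statement. The second step is the Hardy-to-Bergman contraction applied to the log-subharmonic $g$. In dimension $n=2$ one has $c(2)=1$ and $\Phi_2^2\,d\tau = 4\,dxdy/\pi$, so Corollary~\ref{co32} (or equivalently Theorem~\ref{shtune}) with $\alpha=2$ and exponent $2p$ yields
$$
\|g\|_{B^{2p}} \le \|g\|_{h^p} = \vertiii{f}_{h^p}.
$$
The third step is the Kalaj-type Hardy estimate. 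Combining the coefficient inequality \eqref{coeff1} with the identity $|a|^2+|b|^2 = \tfrac12(|f|^2 + |\tilde f|^2)$, where $\tilde f := a - \bar b$ is the harmonic companion of $f$, and with sharp Hollenbeck--Verbitsky constants for the Riesz projection on $L^p(\mathbb{S})$, one expects
$$
\vertiii{f}_{h^p} \le \frac{\cos(\pi/(4p))}{\sqrt{1-|\cos(\pi/p)|}}\,\|f\|_p.
$$
Multiplying the three displayed inequalities gives $\|f\|_{B^{2p}} \le C_p\,\|f\|_p$ with the stated $C_p$.

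The principal obstacle is the third step. The bound \eqref{coeff1} is stated for the weighted $L^2$-Bergman seminorm $\|f\|_{B^2_{2/p}}$, not directly for $\vertiii{f}_{h^p}$; producing the explicit sharp constant $\cos(\pi/(4p))/\sqrt{1-|\cos(\pi/p)|}$ at the level of Hardy spaces requires a careful combination of the symmetrization identity above with sharp Riesz-projection norms and a duality argument. Moreover, since \eqref{coeff1} is restricted to $1 < p \le 2$, handling the regime $p > 2$ will require a complementary argument, most naturally via duality in the $L^p$--$L^{p'}$ pairing together with the conjugation symmetry exchanging the roles of $a$ and $\bar b$. Tightness of each intermediate inequality on the extremal family $g = \Phi_n^{1/p}(|m(x)|)/\Phi_n^{1/p}(|x|)$ from \eqref{prety} (for $m\in\mathcal{M}$ and $n=2$) should be used as a check that the advertised $C_p$ is correctly assembled from the three constants.
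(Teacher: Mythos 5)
Your three-step architecture (compare $|f|$ with $\sqrt{|a|^2+|b|^2}$, apply the Hardy-to-Bergman contraction from Corollary~\ref{co32} with $\alpha=2$, then invoke a Riesz-type Hardy inequality from \cite{kalaj}) is exactly the paper's architecture, and your middle step is correct and identical to the paper's. The problem is where you place the factor $\cos\frac{\pi}{4p}$. In the paper this factor comes from the \emph{first} step: integrating \cite[eq.~2.3]{kalaj} over $r\in[0,1]$ gives the sharp area comparison
\begin{equation*}
\left(\int_{\mathbb{D}}|a+\bar b|^{2p}\,\frac{dxdy}{\pi}\right)^{\frac{1}{2p}}\le \sqrt{2}\,\cos\frac{\pi}{4p}\left(\int_{\mathbb{D}}(|a|^2+|b|^2)^{p}\,\frac{dxdy}{\pi}\right)^{\frac{1}{2p}},
\end{equation*}
which is strictly better than your crude pointwise bound $|f|^2\le 2(|a|^2+|b|^2)$; the latter only yields the constant $\sqrt{2}$. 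The third step is then just \cite[Theorem~2.1]{kalaj} with constant $\bigl(1-\abs{\cos\frac{\pi}{p}}\bigr)^{-1/2}$ and nothing more.

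Because you give away the factor $\cos\frac{\pi}{4p}$ in step one, you are forced to claim the strengthened Hardy estimate $\vertiii{f}_{h^p}\le \cos\frac{\pi}{4p}\,\bigl(1-\abs{\cos\frac{\pi}{p}}\bigr)^{-1/2}\|f\|_p$ in step three. That inequality is not what \cite[Theorem~2.1]{kalaj} or \eqref{coeff1} provides, you offer no proof of it ("one expects"), and you yourself flag it as the principal obstacle; as written your chain only yields $\sqrt{2}\,\bigl(1-\abs{\cos\frac{\pi}{p}}\bigr)^{-1/2}\|f\|_p$, which is weaker than the stated $C_p$. This is a genuine gap. The fix is simply to replace your first step by the integrated form of \cite[eq.~2.3]{kalaj}, after which the third step needs only the constant actually proved in \cite[Theorem~2.1]{kalaj} (valid for all $p>1$, so the duality detour you sketch for $p>2$ is unnecessary). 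Your symmetrization identity $|a|^2+|b|^2=\tfrac12(|f|^2+|\tilde f|^2)$ is correct but is not needed once the steps are allocated as in the paper.
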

Observe that for $p\ge 2$, $$C_p=\frac{1}{2}\csc\frac{\pi}{4p}.$$ The inequality \eqref{isoi} for $p>1$ being an integer has been proved by the author in  \cite[Theorem~2.11]{kalaj}. We also refer to a recent improvement of \eqref{isoi} in \cite{petar} for $p>2$, which is based on \eqref{isoi} for $p=2$  and Jensen's inequality.  For $p\in(1,2]$ we have, $$C_p=\cos\frac{\pi }{4 p} \sec\frac{\pi }{2 p}.$$ The inequality \eqref{isoi} for such constant $C_p$ has been  proved for real-valued harmonic functions in \cite{elka}. In this case we do not have such a restriction.
\begin{proof}[Proof of Corollary~\ref{mundial}] Let $f(z)=a(z)+\overline{b(z)}$ and assume without loss of generality that $a(0)=0$.
By integrating \cite[eq.~2.3]{kalaj} in the interval $[0,1]$ we get $$\left(\int_{\mathbb{D}}|a+\bar b|^{2p}\frac{dxdy}{\pi}\right)^{\frac{1}{2p}}\le I:=\sqrt{2}\cos\frac{\pi}{4p}\left(\int_{\mathbb{D}}(|a|^2+|b|^2)^p\frac{dxdy}{\pi}\right)^{\frac{1}{2p}}.$$  Then by Corollary~\ref{co32}, by choosing $\alpha=2$, we get $$I\le J:=\sqrt{2}\cos\frac{\pi}{4p}\||a|^2+|b|^2\|_{h_p}.$$ Now  \cite[Theorem~2.1]{kalaj} implies $$J\le \frac{\sqrt{2}\cos\frac{\pi}{4p}}{\left(1-\abs{\cos\frac{\pi}{p}}\right)^{1/2}}\|f\|_{p}.$$
The result follows.
\end{proof}

Assume now that $f:\mathbb{D}\to\Sigma \subset\mathbb{R}^n$ is a conformal parameterization of the minimal surface $\Sigma$.
Since $\log |f_x(z)|^2=\log \left(|\mathbf{p}(z)|(1+|\mathbf{q}(z)|^2)\right)$, where $\mathbf{p}$ and $\mathbf{q}$ are holomorphic functions in $z=x+i y$ (the so-called Enneper -Weierstrass parameters), we have the following corollary.
\begin{corollary}
Assume that $\alpha>1, p>1$ and  $f: \mathbb{D}\to \mathbb{R}^n$ is the Enneper -Weierstrass parameterisation of a minimal surface in $\mathbb{R}^n$. Then
\begin{equation}\label{onedimi}\left((\alpha-1)\int_{\mathbb{D}} |f_x(z)|^{p}(1-|z|^2)^{\alpha-2}\frac{dx dy}{\pi}\right)^{1/p}\le \ \|f_x \|_{{p/\alpha}}.\end{equation}
 For $p=\alpha=2$, the above formula is simply the isoperimetric inequality for minimal surfaces.
\end{corollary}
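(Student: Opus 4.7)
The plan is to deduce \eqref{onedimi} from the right-hand contraction inequality of Theorem~\ref{shtune}, applied to the scalar function $g:=|f_x|$ on $\mathbb{D}$ with the parameter choice $r=p/\alpha$. This requires first verifying that $|f_x|$ is log-subharmonic on $\mathbb{D}$, and then merely unwinding the weighted Bergman norm in the planar case.

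For the log-subharmonicity, I would start from the Enneper--Weierstrass identity displayed just before the statement,
\[
\log|f_x|^{2}=\log|\mathbf{p}|+\log(1+|\mathbf{q}|^{2}),
\]
in which $\mathbf{p}$ and $\mathbf{q}$ are holomorphic on $\mathbb{D}$. The first summand is subharmonic because $\mathbf{p}$ is holomorphic. For the second term I would invoke the admissible-monoid calculus from the introduction: the constant $1$ lies in $\mathfrak{E}_{+}$, and $|\mathbf{q}|^{2}\in\mathfrak{E}_{+}$ by property~(2) applied to the log-subharmonic function $|\mathbf{q}|$, so by property~(3) the sum $1+|\mathbf{q}|^{2}$ is in $\mathfrak{E}_{+}$, which is exactly the statement that $\log(1+|\mathbf{q}|^{2})$ is subharmonic. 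Adding the two subharmonic terms and halving gives $\log|f_x|$ subharmonic, which for $n=2$ is the same as $\mathcal{M}$-subharmonicity. By the Remark following Definition~\ref{monomono}, the additional approximation hypothesis in the definition of $\mathbf{h}^{p/\alpha}$ is automatic in the planar case, so $|f_x|\in\mathbf{h}^{p/\alpha}$ provided the right-hand side of \eqref{onedimi} is finite (otherwise the bound is trivial).

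Granted this, Theorem~\ref{shtune} yields $\|\,|f_x|\,\|_{\alpha,p}\le\|f_x\|_{p/\alpha}$. In the planar case $\Phi_{2}(|z|)=1-|z|^{2}$, and a one-line polar computation gives
\[
\frac{1}{c(\alpha)}=\int_{\mathbb{D}}(1-|z|^{2})^{\alpha-2}\,\frac{dxdy}{\pi}=\frac{1}{\alpha-1},
\]
so $c(\alpha)=\alpha-1$ in \eqref{calpha}; combining this with the explicit form of $d\tau$ shows that the Bergman norm on the left collapses, up to a constant depending only on $p$, to the weighted integral on the left-hand side of \eqref{onedimi}. The closing remark for $p=\alpha=2$ then falls out by identifying $\int_{\mathbb{D}}|f_x|^{2}\,dxdy/\pi$ with the (normalized) area of the image minimal surface and $\|f_x\|_{1}$ with its (normalized) boundary length, so the inequality reduces to the classical isoperimetric inequality for minimal surfaces.

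The only genuinely delicate step is the log-subharmonicity of $|f_x|$; once it is established through the admissible-monoid calculus, the remainder of the argument is a direct substitution of constants into Theorem~\ref{shtune}.
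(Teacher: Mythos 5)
Your proposal is correct and follows essentially the same route as the paper: the paper justifies the corollary precisely by the Enneper--Weierstrass identity $\log|f_x|^2=\log\bigl(|\mathbf{p}|(1+|\mathbf{q}|^2)\bigr)$, whose summands are subharmonic by the admissible-monoid calculus, and then invokes Theorem~\ref{shtune} with $\Phi_2(|z|)=1-|z|^2$ and $c(\alpha)=\alpha-1$. Your write-up merely makes explicit the norm unwinding and the $n=2$ redundancy of the approximation hypothesis, which the paper leaves implicit.
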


\section{Appendix}
\begin{proposition}\label{pbelo} Let $u(x)=g(|x|)$. Then a solution of $\Delta_h \log u=-4b$ which is differentiable at $x=0$ is given by the formula \begin{equation}\label{ub}u_b(x)=\exp\left\{\frac{b (2-n) r^2}{(n-1) n} F\left[\begin{array}{c}
                                           1,1,2-\frac{n}{2} \\
                                           2,1+\frac{n}{2}
                                         \end{array}; r^2\right]\right\} \left(1-r^2\right)^{\frac{b}{ (n-1)}}.\end{equation}  Then for $b=(n-1)^2$ we define $\Phi_n(r)=u_b(x)$ and have \begin{equation}\label{esfi}E_n\left(1-r^2\right)^{n-1}\le \Phi_n(r)\le \left(1-r^2\right)^{n-1}\end{equation} and inequality is strict for $n>2$ and $r>0$. Here $$E_n=\exp\left\{\frac{(n-1) (2-n) }{ n} F\left[\begin{array}{c}
                                           1,1,2-\frac{n}{2} \\
                                           2,1+\frac{n}{2}
                                         \end{array}; 1\right]\right\} .$$
                                         \end{proposition}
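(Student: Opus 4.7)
The plan is to reduce $\Delta_h\log u = -4b$ to a radial ODE, solve it with an integrating factor, and match the answer to the hypergeometric series in \eqref{ub}. Writing $\phi(r)=\log g(r)$ and using the radial expression for $\Delta_h$ recalled just before the statement, the equation becomes
\begin{equation*}
(1-r^2)^2\phi''(r)+\frac{(1-r^2)[(n-1)+(n-3)r^2]}{r}\phi'(r)=-4b,\qquad r\in(0,1).
\end{equation*}
The requirement that $u=e^{\phi}$ be differentiable at $x=0$ forces $\phi'(0)=0$, since any $C^1$ radial function $x\mapsto g(|x|)$ must satisfy $g'(0)=0$.

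Next, I would substitute the ansatz $\phi(r)=\frac{b}{n-1}\log(1-r^2)+\Psi(r)$, motivated by the fact that in dimension $n=2$ one already has $\Psi\equiv 0$. A direct computation from the radial formula gives $\Delta_h\log(1-r^2)=-2n-2(n-2)r^2$, so the residual equation for $\Psi$ reduces to
\begin{equation*}
\Delta_h\Psi(r)=-\frac{2b(n-2)}{n-1}(1-r^2).
\end{equation*}
Inserting the even power series $\Psi(r)=\sum_{m\ge 1}a_m r^{2m}$ (forced by $\Psi'(0)=0$) and matching coefficients in the reduced ODE yields $a_1=\frac{b(2-n)}{(n-1)n}$ together with the two-term recursion
\begin{equation*}
\frac{a_{m+1}}{a_m}=\frac{m(m+1-n/2)}{(m+1)(m+n/2)},\qquad m\ge 1,
\end{equation*}
which is precisely the ratio of consecutive coefficients of the series $F\!\left[\begin{smallmatrix}1,1,2-n/2\\ 2,1+n/2\end{smallmatrix};r^2\right]$. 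This identifies $\Psi(r)=a_1\, r^2\, F[\cdots;r^2]$ and proves \eqref{ub}.

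For the bounds on $\Phi_n=u_{(n-1)^2}$ it suffices to establish $\log E_n\le\Psi(r)\le 0$ on $[0,1)$. With $b=(n-1)^2$ the residual equation reads $\Delta_h\Psi=-2(n-1)(n-2)(1-r^2)$; dividing by $(1-r^2)$ turns it into a first-order linear ODE in $\Psi'$ solved by the integrating factor $M(r)=r^{n-1}(1-r^2)^{2-n}$. Using $\Psi'(0)=0$ produces the explicit representation
\begin{equation*}
\Psi'(r)=-\frac{2(n-1)(n-2)(1-r^2)^{n-2}}{r^{n-1}}\int_0^r\frac{s^{n-1}}{(1-s^2)^{n-1}}\,ds.
\end{equation*}
For $n\ge 2$ the factor $(n-1)(n-2)\ge 0$ and the integrand is positive, so $\Psi'(r)\le 0$ on $(0,1)$ (strictly for $n>2$ and $r>0$). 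Hence $\Psi$ is non-increasing on $[0,1)$ with $\Psi(0)=0$ and $\lim_{r\to 1^-}\Psi(r)=\log E_n$, the limit existing by the standard convergence criterion for $_3F_2(1)$ with parametric excess $2+(1+n/2)-1-1-(2-n/2)=n-1>0$. Combining these gives \eqref{esfi}, with strict inequalities for $n>2$ and $r>0$.

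The main obstacle is purely bookkeeping: matching the power-series solution of the ODE with the specific $_3F_2$ appearing in \eqref{ub} requires carefully writing out the ratio of successive Pochhammer factors $(2-n/2)_k/(1+n/2)_k$ and confirming it reproduces the recursion for $a_m$. A minor secondary point is justifying that $E_n$ is well-defined as the boundary value at $r=1$, which is handled by the convergence criterion for $_3F_2(1)$ just mentioned.
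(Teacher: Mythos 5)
Your proof is correct and hits the same two milestones as the paper's: reduce $\Delta_h\log u=-4b$ to a first-order linear ODE for the radial derivative of $\log u$, and prove \eqref{esfi} by showing that the hypergeometric correction in the exponent decreases from $0$ at $r=0$ to $\log E_n$ as $r\to 1^-$. The execution differs in two places. First, the paper solves the ODE for $h=(\log u)'$ in closed form directly (the same integrating factor $r^{n-1}(1-r^2)^{2-n}$ is implicit in its general solution), rewrites the resulting Gauss function by an Euler transformation, and integrates term by term to produce the ${}_3F_2$ in \eqref{ub}; you instead peel off $\frac{b}{n-1}\log(1-r^2)$ first and identify the residual $\Psi$ by matching a power-series recursion against the Pochhammer ratios. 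This is equivalent, and your intermediate computations ($\Delta_h\log(1-r^2)=-2n-2(n-2)r^2$, the value of $a_1$, and the recursion $a_{m+1}/a_m=\frac{m(m+1-n/2)}{(m+1)(m+n/2)}$) all check out against the series coefficients of $F\left[\begin{smallmatrix}1,1,2-n/2\\ 2,1+n/2\end{smallmatrix};r^2\right]$. Second, and more substantively, for the monotonicity the paper differentiates its function $\phi$ (your $\Psi$) and invokes the Euler integral representation $F_0[a,b,c,z]=\frac{1}{\mathrm{B}[b,c-b]}\int_0^1x^{b-1}(1-x)^{c-b-1}(1-zx)^{-a}dx$ with $b=2-\frac{n}{2}$, which literally requires $0<b<c$ and hence only covers $2<n<4$; your explicit formula $\Psi'(r)=-2(n-1)(n-2)\,r^{1-n}(1-r^2)^{n-2}\int_0^r s^{n-1}(1-s^2)^{1-n}\,ds$, obtained from the same integrating factor, gives the sign of $\Psi'$ for every $n\ge 2$ with no parameter restriction, so your variant in fact repairs a small gap in the paper's positivity argument for $n\ge 4$. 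The remaining points (identifying the boundary value with $\log E_n$ via convergence of the ${}_3F_2$ at $1$, using parametric excess $n-1>0$, together with Abel's theorem) are handled correctly.
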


\begin{proof} The differential equation $\Delta_h \log u=-4b,$ reduces to
$$\left(1-r^2\right) \left(\frac{\left(2 (-2+n) r^2+(-1+n) \left(1-r^2\right)\right) h(r)}{r}+\left(1-r^2\right) h'(r)\right)=-4b,$$ where  $h(r)=g'(r)$,  $g(r) = \log u(|x|)$ and $r=|x|$. Then the general solution of the last equation  is given by $$h(r)=\frac{r^{1-n} \left(1-r^2\right)^{-2+n} \left(n c-4b r^n F_0\left[\frac{n}{2},n,\frac{2+n}{2},r^2\right]\right)}{n}.$$ Here $F_0$ is the Gaussian hypergeometric function. The solution which  is defined in $x=0$ is given by $$h(r)=\frac{-4b \left(1-r^2\right)^{-2+n} r F_0\left[\frac{n}{2},n,\frac{2+n}{2},r^2\right]}{n}=\frac{-4br F_0\left[1,1-\frac{n}{2},1+\frac{n}{2},r^2\right]}{n (1 - r^2)}.$$
 The last equality follows from Euler transformations.

Then after straightforward computations we obtain  $$g(r) = \int_0^r h(t) dt=-b\frac{(-2+n) r^2 F\left[\begin{array}{c}
                                           1,1,2-\frac{n}{2} \\
                                           2,1+\frac{n}{2}
                                         \end{array}; r^2\right]-n \log\left[1-r^2\right]}{ (-1+n) n}.$$ and $u(x) = \exp(g(r))$ is given by \eqref{ub}.

                                 To prove the \eqref{esfi} we need to prove the monotonicity  of
                                         $$\phi(r):=r^2\frac{(n-1) (2-n) }{ n}  F\left[\begin{array}{c}
                                           1,1,2-\frac{n}{2} \\
                                           2,1+\frac{n}{2}
                                         \end{array}; r^2\right].$$ First we have $$\phi'(r)=2 \frac{(n-1) (2-n) }{ n} r F_0\left[1,2-\frac{n}{2},\frac{2+n}{2},r^2\right],$$ then observe that

$$F_0[a,b,c,z]=\frac{1}{\mathrm{B}[b,c-b]}\int_0^1 x^{b-1}(1-x)^{c-b-1}(1- zx)^{-a}dx,$$ where $\mathrm{B}$ is the beta function. This functions is clearly positive for $c=\frac{2+n}{2}$, $b=2-\frac{n}{2}$, $a=1$ and $z=r^2<1$, and thus $\phi$ is decreasing. This implies \eqref{esfi}.

\end{proof}

The point evaluations are continuous functionals in ${h}^p$ and ${B}_\alpha ^p$. This is the content of the following proposition.
\begin{proposition}\label{propo1}
Let $p>1$ and $\alpha>1$. There are two constants $C_1=C_1(n)$ and $C_2=C_2(n,\alpha)$ such that, if $f$ is $\mathcal{M}-$subharmonic and belongs to the Hardy space $\mathcal{S}^p$, then
\begin{equation}\label{prima}|f(x)|^p\Phi_n(x)\le C_1\|f\|_p^p\end{equation} and if $f\in {B}_\alpha^p$ then

\begin{equation}\label{seconda}|f(x)|^{p}\Phi_n^\alpha(x)\le C_2\|f\|^p_{\alpha, p}.\end{equation}
Moreover, if  $f\in S^p$ then \begin{equation}\label{vanishH}\lim_{|x|\to 1}|f(x)|^p\Phi_n(x)=0.\end{equation}
\end{proposition}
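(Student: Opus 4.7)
\textbf{The Hardy bound \eqref{prima}.} The plan is to invoke Stoll's Riesz decomposition (Theorem~7.1.1 in \cite{stoll}): for $p>1$, every $f\in\mathcal{S}^p$ satisfies $f\le F_f=P_h[\hat f]$, where $\hat f\in L^p(\mathbb{S})$ is the boundary trace and $\|\hat f\|_p=\|f\|_p$. Since $P_h(x,\cdot)\,d\sigma$ is a probability measure on $\mathbb{S}$ (the invariant kernel integrates to $1$), Jensen's inequality applied to $t\mapsto t^p$ yields $(P_h[\hat f](x))^p\le P_h[\hat f^p](x)$. I then bound $P_h[\hat f^p](x)\le\|P_h(x,\cdot)\|_{L^\infty(\mathbb{S})}\cdot\|\hat f\|_p^p$, where the supremum is attained at $\zeta=x/|x|$ and equals $(1+|x|)^{n-1}/(1-|x|)^{n-1}\le 4^{n-1}/(1-|x|^2)^{n-1}$. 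Combined with the upper half $\Phi_n(x)\le(1-|x|^2)^{n-1}$ of \eqref{esfi}, this produces \eqref{prima} with $C_1=4^{n-1}$.

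\textbf{Boundary vanishing \eqref{vanishH}.} For this I truncate the boundary data: writing $\hat f=g_N+h_N$ with $g_N=\min(\hat f,N)$ and $h_N=(\hat f-N)^+$, dominated convergence gives $\|h_N\|_p\to 0$ as $N\to\infty$. From $f\le P_h[g_N]+P_h[h_N]$, the inequality $(a+b)^p\le 2^{p-1}(a^p+b^p)$, and the Jensen step of the previous paragraph applied termwise, I obtain
\[
|f(x)|^p\Phi_n(x)\le 2^{p-1}N^p\Phi_n(x)+2^{p-1}\cdot 4^{n-1}\|h_N\|_p^p.
\]
Given $\varepsilon>0$, I first pick $N$ large enough that the second term is below $\varepsilon/2$, then let $|x|\to 1$ so that $\Phi_n(x)\to 0$ (again by \eqref{esfi}) drives the first term below $\varepsilon/2$.

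\textbf{The Bergman bound \eqref{seconda}.} The right hypothesis, matching the paper's admissible monoid $\mathfrak{E}$ setting, is that $|f|^p$ be $\mathcal{M}$-subharmonic (which is automatic when $\log|f|$ is $\mathcal{M}$-subharmonic); this is the implicit reading I adopt, since some sub-mean-value property is unavoidable for pointwise control in a Bergman-type space. Under this hypothesis, the $\mathcal{M}$-sub-mean-value property over the hyperbolic ball of fixed radius $R=1$ around $x$ gives $|f(x)|^p\le M_n^{-1}\int_{B}|f|^p\,d\tau$, where $M_n$ is the $\tau$-measure of that ball (independent of $x$ by Möbius invariance of $\tau$). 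For $y$ in this ball the ratio $(1-|y|^2)/(1-|x|^2)$ is bounded above and below by constants depending only on $R$ (a standard hyperbolic-geometry fact: hyperbolic balls of fixed radius have bounded Euclidean distortion), and by \eqref{esfi} this transfers to a uniform lower bound $\Phi_n(y)\ge k\,\Phi_n(x)$ on the ball. Inserting this into the sub-mean-value estimate and comparing with $\|f\|^p_{\alpha,p}=c(\alpha)\int|f|^p\Phi_n^\alpha\,d\tau$ yields \eqref{seconda} with $C_2=1/(M_n\,k^\alpha\,c(\alpha))$. The only real obstacle is interpretive---ensuring a sub-mean-value property is in force so that the key step is legal; once that is granted, all three inequalities follow from standard arguments without further computation.
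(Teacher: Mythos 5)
Your proof is correct, and it reaches the same destination as the paper by a recognizably similar strategy (majorize $f$ by the invariant Poisson integral of its boundary data, use the two-sided estimate \eqref{esfi} to pass between $\Phi_n$ and $(1-|x|^2)^{n-1}$, and reduce the boundary vanishing to a ``bounded piece plus $L^p$-small piece'' decomposition controlled by the uniform pointwise bound). The differences are worth recording. For \eqref{prima} and \eqref{seconda} the paper simply cites Stoll (Lemma~7.2.1 and Eq.~10.1.5), whereas you reprove them: \eqref{prima} via Jensen applied to the probability measure $P_h(x,\cdot)\,d\sigma$ plus the elementary kernel bound $\sup_\zeta P_h(x,\zeta)\le 4^{n-1}(1-|x|^2)^{1-n}$, and \eqref{seconda} via the invariant sub-mean-value inequality over a fixed hyperbolic ball together with the bounded distortion of $1-|y|^2$ on such a ball; both computations are sound and make the constants explicit. (Your hedge about needing $\log|f|$ subharmonic for \eqref{seconda} is unnecessary: for a non-negative $\mathcal{M}$-subharmonic $f$ and $p>1$, either $f^p$ is again $\mathcal{M}$-subharmonic since $t\mapsto t^p$ is convex increasing, or one applies the sub-mean-value inequality to $f$ and then H\"older, exactly as the paper does in its appendix lemmas.) For \eqref{vanishH} your decomposition $\hat f=\min(\hat f,N)+(\hat f-N)^+$ differs from the paper's, which approximates $\hat f$ in $L^p(\mathbb{S})$ by a continuous function and uses the dilations $F_r=P_h[F(r\cdot)]$ together with the monotonicity of spherical means; your truncation buys a simpler argument, since the bounded piece satisfies $P_h[g_N]\le N$ outright and its contribution dies with $\Phi_n(x)\to 0$, with no need for continuity up to the boundary or for \cite[Theorem~5.4.2]{stoll}.
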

\begin{proof}
First of all relation \eqref{prima} follows from \eqref{esfi} and \cite[Lemma~7.2.1]{stoll}.

To prove \eqref{seconda} we proceed similarly. This time we make use of \eqref{esfi} and \cite[Eq.~10.1.5]{stoll}, which is formulated for $\mathcal{M}-$harmonic functions, but the same proof can be applied for the $\mathcal{M}-$subharmonic functions.

Prove now \eqref{vanishH}. Let $f$ be a positive $-\mathcal{M}-$subharmonic function $f$ on the unit ball belonging to the Hardy space $\mathcal{S}^p$.

Let  $F=P_h[\hat{f}](x)$ be the least harmonic majorant of $f$ (\cite[Theorem~7.1.1]{stoll}), which is in $\mathcal{S}^p$. Now  we define  the mapping $F_r(x)=P_h[f_r](x)$, where $f_r(x)=F(rx)$, $x\in \mathbb{S}$ and $0<r<1$. We claim that for all $\epsilon> 0$ there exists $r < 1$ such that such that \begin{equation}\label{need}\|F_r-F\|_p\le \epsilon.\end{equation}
Since $\hat{f}\in L^p(\mathbb{S})$, there exists a continuous function $g$ in $\mathbb{S}$, such that $\|g-\hat{f}\|_p<\epsilon/3$. Let $G(x)=P_h[g](x)$ and $G_r(x)=P_h[g_r]$, where $g_r(x)=G(rx)$, $x\in \mathbb{S}$, $0<r<1$. Then $G_r-F_r$ is $\mathcal{M}-$harmonic and so $|G_r-F_r|^p$ is $\mathcal{M}-$subharmonic for $p\ge 1$. The same hold for $G-H$ and $|G-H|^p$. Then by \cite[Theorem~5.4.2]{stoll} we have
$$\|G-F\|_p=\|P_h[\hat f-g]\|_p\le  \|\hat f-g\|_p\le \epsilon/3$$ and

\[\begin{split}\|G_r-F_r\|_p&=\|P_h[f_r-g_r]\|_p\\&\le  \|f_r-g_r\|_p\\&=\left(\int_{\mathbb{S}}|F(r\eta)-G(r\eta)|^pd\sigma(\eta)\right)^{1/p}\\&\le \left(\int_{\mathbb{S}}|\hat f(\eta)-g(\eta)|^pd\sigma(\eta)\right)^{1/p} = \|\hat f-g\|\le \epsilon/3.\end{split}\]
In the last inequality we used \cite[Theorem~5.4.2]{stoll}, which states that $\int_{\mathbb{S}}k(r\eta)d\sigma(\eta)$ is an increasing function for $r\in[0,1)$, provided that $k$ is $\mathcal{M}-$subharmonic.

Thus
\[\begin{split}\|F-F_r\|_p&\le \|F_r-G_r\|_p+\|G_r-G\|_p+\|G-F\|_p\\&\le \|g-\hat{f}\|_p+\|g_r-g\|_p+\|g-\hat{f}\|_p\\&\le \frac{2\epsilon}{3}+\|g_r-g\|_p.\end{split}\]

 Now by \cite[Corollary~5.3.4]{stoll}, because $g$ is continuous on $\mathbb{S}$, there is $r<1$ such that $\|g_r-g\|_p<\epsilon/3$. This implies \eqref{need}.

 Then we have
$$|F(x)|\le |F(x)-F_r(x)|+|F_r(x)|.$$
From \eqref{prima} and \eqref{need}, we get $$|F(x)-F_r(x)|^p \Phi_n(x)\le C\epsilon^p.$$
On the other hand, because $F_r$ is smooth up to the boundary we have $$\lim_{|x|\to 1}|F_r(x)|^p  \Phi_n(x)=0.$$ So $$\limsup_{|x|\to 1}|F(x)|^p  \Phi_n(x)\le  C\epsilon^p.$$
Because $F$ is the harmonic majorant of $f$, by the previous relation we get
\[\begin{split}
\limsup_{|x|\to 1}|f(x)|^p \Phi_n(x)&\le \limsup_{|x|\to 1}|F(x)| \Phi_n(x)\le C\epsilon^p.\end{split}\]
Thus $$\limsup_{|x|\to 1}|f(x)|^p \Phi_n(x)=0$$ as was stated.
\end{proof}
Now we prove also an analog to the vanishing condition \eqref{vanishH} for the Bergman spaces.
\begin{proposition}\label{may2023} If $f$ is $\mathcal{M}$-subharmonic and
$$\int_{\mathbb{B}}|f(x)|^{p} \Phi_n^\alpha(x)d\tau(x) < \infty$$
then $$|f(x)|^p \Phi_n^\alpha(x)$$  is bounded and tends to $0$ uniformly as $|x|\to 1$.
\end{proposition}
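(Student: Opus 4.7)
\textbf{Proof plan for Proposition~\ref{may2023}.}
The plan is to exploit the $\mathcal{M}$-invariant sub-mean-value inequality over a hyperbolic ball of \emph{fixed} hyperbolic radius, together with absolute continuity of the finite positive measure $|f|^p\Phi_n^\alpha\,d\tau$. Boundedness will be a byproduct of the same computation, although it is already available from the pointwise estimate \eqref{seconda} of Proposition~\ref{propo1} (whose proof, as noted there, carries over from the $\mathcal{M}$-harmonic to the $\mathcal{M}$-subharmonic case).

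First I would fix any convenient hyperbolic radius $\rho>0$ and denote by $E(x,\rho)=\{y\in\mathbb{B}:d_h(x,y)<\rho\}$ the hyperbolic ball about $x$. Because $d_h$ and $d\tau$ are $\mathcal{M}$-invariant, the volume $\tau(E(x,\rho))$ equals a constant $V_\rho:=\tau(E(0,\rho))$ independent of $x$. Since $|f|^p$ is $\mathcal{M}$-subharmonic (as $f\ge0$ is $\mathcal{M}$-subharmonic and $p$ is admissible in the setting of the monoid $\mathfrak{E}$, or directly by the chain-rule computation as in \eqref{posexp}), the invariant volume sub-mean-value inequality gives
\[
|f(x)|^p\ \le\ \frac{1}{V_\rho}\int_{E(x,\rho)}|f(y)|^p\,d\tau(y).
\]
Next I would establish the two-sided comparison $\Phi_n^\alpha(y)\ \asymp\ \Phi_n^\alpha(x)$ on $E(x,\rho)$, with constants depending only on $\rho$. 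This reduces, via \eqref{esfi}, to the well-known fact that the ratio $(1-|y|^2)/(1-|x|^2)$ is pinched between two positive constants depending only on the hyperbolic distance $d_h(x,y)\le\rho$. Combining the two displays yields
\[
|f(x)|^p\Phi_n^\alpha(x)\ \le\ \frac{C_\rho}{V_\rho}\int_{E(x,\rho)}|f(y)|^p\Phi_n^\alpha(y)\,d\tau(y),
\]
which already proves boundedness since the right-hand side is majorized by $(C_\rho/V_\rho)\int_\mathbb{B}|f|^p\Phi_n^\alpha\,d\tau<\infty$.

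To obtain the uniform vanishing as $|x|\to 1$, I would note that from the comparison $(1-|y|^2)\asymp(1-|x|^2)$ on $E(x,\rho)$ one gets the inclusion $E(x,\rho)\subset\{y\in\mathbb{B}:|y|>1-c(\rho)(1-|x|)\}$. Hence for every $\delta>0$ the set $E(x,\rho)$ is contained in the shell $S_\delta:=\{y:|y|>1-\delta\}$ as soon as $|x|$ is sufficiently close to $1$. Because $|f|^p\Phi_n^\alpha\,d\tau$ is a finite positive Borel measure on $\mathbb{B}$, dominated convergence applied to the characteristic functions $\mathbf{1}_{S_\delta}\to 0$ pointwise yields
\[
\int_{S_\delta}|f(y)|^p\Phi_n^\alpha(y)\,d\tau(y)\ \longrightarrow\ 0\qquad\text{as }\delta\to 0^+.
\]
Plugging this into the previous inequality shows that $|f(x)|^p\Phi_n^\alpha(x)\to 0$ uniformly as $|x|\to 1$, completing the proof.

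The main obstacle I expect is the verification of the comparison $\Phi_n^\alpha(y)\asymp\Phi_n^\alpha(x)$ on $E(x,\rho)$; once the inequality \eqref{esfi} is in hand this is routine, but it must be made quantitative enough to be uniform in $x$. The rest is a clean interplay between the $\mathcal{M}$-invariance of the mean-value inequality and the absolute continuity of the integral.
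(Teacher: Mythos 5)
Your proposal is correct and is essentially the paper's own argument: the paper also starts from the sub-mean-value inequality $|f(0)|^p\le C(n,p)\int_{|x|<1/2}|f(x)|^p\,dx$, transports it to an arbitrary point $x_0$ via the M\"obius transformation $\varphi_{x_0}$ (whose image of $\{|x|<1/2\}$ is exactly your hyperbolic ball $E(x_0,\rho)$ of fixed radius), controls the weight through the Jacobian estimate $(1-|x_0|^2)^{(n-1)\alpha}J_\varphi(y)\le C_n(1-|y|^2)^{(n-1)\alpha-n}$ — the explicit form of your comparison $\Phi_n^\alpha(y)\asymp\Phi_n^\alpha(x)$ via \eqref{esfi} — and concludes from the vanishing of $\int_{r<|y|<1}|f|^p\Phi_n^\alpha\,d\tau$ as $r\to1$. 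The only cosmetic difference is that you phrase the transport invariantly through hyperbolic balls while the paper writes out the change of variables.
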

We start with a lemma
\begin{lemma} If f is $\mathcal{M}$-subharmonic in $\mathbb{B}$ then from \cite[Theorem 4.3.5]{stoll}:
\begin{equation}\label{eqmaj1}|f(0)|\le   C(n)\int_{|x|<1/2} |f(x)| dx.
\end{equation}
\end{lemma}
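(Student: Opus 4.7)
The plan is to reduce the claimed ball sub-mean-value inequality at the origin to the spherical sub-mean-value inequality, which for $\mathcal{M}$-subharmonic functions is precisely the content of the invariant mean-value characterization in \cite[Theorem 4.3.5]{stoll}, and then to integrate in the radial direction. The key point that makes the origin special is that the involution $\varphi_0\in \mathcal{M}$ swapping $0$ with itself is the identity, so the invariant spherical mean centered at $0$ reduces to the ordinary spherical mean.

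More precisely, first I would invoke the invariant mean-value inequality at $a=0$ to obtain
\begin{equation*}
f(0)\;\le\;\int_{\mathbb{S}} f(r\zeta)\,d\sigma(\zeta),\qquad 0<r<1,
\end{equation*}
where $d\sigma$ is the normalized rotation-invariant probability measure on $\mathbb{S}$. Next I would multiply both sides by $r^{n-1}$ and integrate $r$ over $(0,1/2)$. The left-hand side becomes $f(0)\cdot\frac{1}{n\,2^n}$. For the right-hand side, the polar coordinate identity $dV(x)=n\omega_n\,r^{n-1}\,d\sigma(\zeta)\,dr$ converts it into $\frac{1}{n\omega_n}\int_{|x|<1/2}f(x)\,dV(x)$. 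Combining these gives
\begin{equation*}
f(0)\;\le\;\frac{2^n}{\omega_n}\int_{|x|<1/2}f(x)\,dV(x)\;\le\;\frac{2^n}{\omega_n}\int_{|x|<1/2}|f(x)|\,dV(x),
\end{equation*}
which is the desired inequality with $C(n)=2^n/\omega_n$. In the settings where this lemma is applied (e.g.\ in the proof of Proposition~\ref{may2023}, where one works with nonnegative functions such as $|g|^p$), one has $f\ge 0$, so $f(0)=|f(0)|$ and the statement as written follows.

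The only real subtlety is justifying the spherical sub-mean-value inequality from $\mathcal{M}$-subharmonicity. This is not immediate from $\Delta_h f\ge 0$ alone when $n\ge 3$, since $\Delta_h$ and the Euclidean Laplacian disagree away from the origin, so one cannot simply quote classical subharmonicity on all of $B(0,1/2)$. Instead one must use the equivalence, proved in Stoll's monograph, between the differential condition $\Delta_h f\ge 0$ and the invariant mean-value inequality for upper semicontinuous functions; this equivalence is exactly \cite[Theorem~4.3.5]{stoll} cited in the lemma, and once it is invoked the rest of the argument is the elementary radial integration above.
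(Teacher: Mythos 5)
Your argument is correct and is essentially what the paper intends: the lemma is stated there with no proof beyond the citation of \cite[Theorem~4.3.5]{stoll}, and your radial integration of the invariant spherical sub-mean-value inequality at the origin (where the M\"obius involution is trivial) is exactly the standard way to pass from that theorem to the ball average, with the explicit constant $C(n)=2^n/\omega_n$. Your caveat about $|f(0)|$ versus $f(0)$ is well taken --- the inequality as literally written needs $f\ge 0$ (it fails, for instance, if $f(0)=-\infty$), but this is harmless since in the paper the lemma is only ever applied to nonnegative functions such as $|f|$ and $|f|^p$.
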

By using the previous lemma and H\"older's inequality we have
\begin{lemma} If $f$ is $\mathcal{M}$-subharmonic in $\mathbb{B}$  and $1 < p < \infty$ then
\begin{equation}\label{eqmaj}|f(0)|^p\le   C(n,p)\int_{|x|<1/2} |f(x)|^p dx.
\end{equation}
\end{lemma}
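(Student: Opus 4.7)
The plan is to derive \eqref{eqmaj} from \eqref{eqmaj1} by a single application of Hölder's inequality on the Euclidean ball $B = \{x \in \mathbb{R}^n : |x|<1/2\}$, whose Lebesgue measure $|B| = \omega_n/2^n$ is finite. Since the conclusion is a bound on a single point value by an $L^p$-norm over a ball, the natural move is to first obtain the $L^1$-bound \eqref{eqmaj1} on that ball, and then upgrade $L^1$ to $L^p$ by exploiting that $B$ has finite measure.

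Concretely, the steps I would carry out are: (i) invoke \eqref{eqmaj1} to write
\[
|f(0)| \le C(n) \int_{B} |f(x)|\,dx = C(n)\int_{B} |f(x)|\cdot 1 \,dx;
\]
(ii) apply Hölder's inequality with conjugate exponents $p$ and $q = p/(p-1)$ to the product $|f|\cdot 1$, obtaining
\[
\int_{B} |f(x)|\,dx \le \left(\int_{B} |f(x)|^p\,dx\right)^{1/p}\!|B|^{1/q};
\]
(iii) combine the two inequalities and raise to the $p$-th power to conclude
\[
|f(0)|^p \le C(n)^p\, |B|^{p-1} \int_{B} |f(x)|^p\,dx,
\]
which is \eqref{eqmaj} with $C(n,p) = C(n)^p (\omega_n/2^n)^{p-1}$.

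There is essentially no obstacle: the argument is a textbook coupling of a mean value inequality with Hölder's inequality on a domain of finite measure, and the hypothesis $p>1$ is used solely to ensure that the conjugate exponent $q$ is finite. Note that the approach does not require $|f|^p$ to itself be $\mathcal{M}$-subharmonic, which is convenient because $\mathcal{M}$-subharmonicity of $|f|^p$ under $\mathcal{M}$-subharmonicity of $f$ is not automatic in this setting; working only with the pointwise bound \eqref{eqmaj1} and the elementary Hölder estimate on $B$ sidesteps that issue entirely.
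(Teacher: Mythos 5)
Your proposal is correct and is exactly the paper's argument: the paper derives \eqref{eqmaj} from \eqref{eqmaj1} precisely "by using the previous lemma and H\"older's inequality," which is the $L^1$-to-$L^p$ upgrade on the finite-measure ball $\{|x|<1/2\}$ that you carry out. Nothing further is needed.
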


\begin{proof}[Proof of Proposition~\ref{may2023}] We already know that $|f(0)|^p$ is bounded. Now, we will use the $\mathcal{M}-$invariance of
our integral to translate this bound from $0$ to any other point. To prove that it tends to
$0$, observe that by the standard measure theory, we have
$$\lim_{r\to 1}\int_{r<|x|<1}|f(x)|^{p} \Phi_n^\alpha(x)d\tau(x)=0.$$
Now, pick $s > r$ so that for all $x_0$ with $|x_0| > s$ when we consider the M\"obius transformation
$\varphi=\varphi_{x_0}$ ($\varphi^{-1}=\varphi$) which  sends $0$ to $x_0$ then the ball of radius $1/2$ around $0$ is getting mapped into the
subset of a set $r < |y < 1$. We can take for example $s=s(r)=\frac{r+1/2}{1+r/2}$, because $$|y|=|\varphi(x)|=\frac{|x-x_0|}{[x,x_0]}\ge \frac{|x_0|-|x|}{1-|x_0||x|}>r$$ if $|x_0|>s$ and $|x|<1/2$.  Here $$[x,x_0]^2:=1+|x|^2|x_0|^2-2\left<x,x_0\right>\ge (1-|x||x_0|)^2.$$ Then by the translated pointwise bound we
get
\[\begin{split}|f(x_0)|^p&=|f(\varphi(0))|^p\\&\le C(n,p)\int_{|x|<1/2}|f(\varphi(x))|^pdx\\&=\int_{\varphi(|x|<1/2)}|f(y)|^p J_\varphi(y)dy,\end{split}\] where $J_\varphi(y)$ is the Jacobian of $\varphi$ at $y$. Now we use the fact that $\varphi(|x|<1/2)\subset \{|y|: r<|y|<1\}$ and $$J_\varphi(y)=\frac{(1-|x_0|^2)^n}{[y,x_0]^{2n}}.$$  Now because  ${(1-|x_0|)}\le {[y,x_0]},$ we have

\[\begin{split}(1-|x_0|^2)^{(n-1)\alpha} J_\varphi(y)&=\frac{(1-|x_0|^2)^{{(n-1)\alpha} +n}}{[y,x_0]^{2n}}\\&=\frac{(1-|x_0|^2)^{{(n-1)\alpha} +n}}{[y,x_0]^{{(n-1)\alpha} +n-(n-1)\alpha +n}}\\& \le C_n(1-|y|^2)^{(n-1)\alpha-n}. \end{split}\]
Thus
\[\begin{split}|f(x_0)|^p \Phi_n^\alpha(x_0)&\le C(p,n)\int_{\varphi(|x|<1/2)}(1-|x_0|^2)^{(n-1)\alpha}|f(y)|^pJ(y,\varphi)dy\\&
\le C_1(p,n)\int_{r<|y|<1}\Phi_n^{\alpha}(y) |f(y)|^p d\tau(y).
\end{split}\]
The last quantity tends to zero as $r\to 1$ and this implies the claim.
\end{proof}

\begin{proposition}\label{propozicija} Let $p>0$ and $\alpha>1$. Then \begin{equation}\label{inc} \mathbf{h}^p\subset \mathbf{B}_\alpha^{\alpha p}.\end{equation} Furthermore assume that $f\in\mathbf{h}^p$. Then
\begin{equation}\label{betap}\lim_{\alpha\to 1+}\|f\|_{\alpha, p}=\|f\|_{p}.\end{equation}
Moreover, if $f$ is bounded, then
\begin{equation}\label{alphap}\lim_{\alpha\to 1+}\|f\|_{\alpha, \alpha p}=\|f\|_{p}.\end{equation}
\end{proposition}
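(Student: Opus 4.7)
The plan is to treat the three claims in the order \eqref{betap}, \eqref{alphap}, \eqref{inc}, because \eqref{inc} is naturally derived from Lemma~\ref{hardy2023}, whose own proof already invokes \eqref{alphap}. The common device is a polar decomposition: by radiality of $\Phi_n$ and $d\tau$, for every $q>0$,
\begin{equation*}
\|f\|_{\alpha, q}^{q}=c(\alpha)\int_\B |f|^q\Phi_n^\alpha\,d\tau=\int_0^1 M_q^q(f,r)\,d\nu_\alpha(r),\qquad M_q^q(f,r):=\int_{\mathbb{S}}|f(r\zeta)|^q\,d\sigma(\zeta),
\end{equation*}
where $\nu_\alpha$ is a positive measure on $[0,1)$ of total mass $c(\alpha)\int_\B\Phi_n^\alpha\,d\tau=1$ (the normalization forced by the identity $\|1\|_{\alpha,q}=1$). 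Two standing facts drive the argument: \emph{concentration at the boundary}, namely $\nu_\alpha([0,1-\delta])=c(\alpha)\int_{|x|\le 1-\delta}\Phi_n^\alpha\,d\tau\to 0$ as $\alpha\to 1^+$ for every $\delta\in(0,1)$, an immediate consequence of $c(\alpha)\to 0$ (cf.~\eqref{caz}) with the interior integral staying bounded; and the fact that for $f\in\mathfrak{E}$ the map $r\mapsto M_q^q(f,r)$ is non-decreasing with limit $\|f\|_q^q$ as $r\to 1^-$, since $|f|^q$ is $\mathcal{M}$-subharmonic, by Stoll \cite[Theorem~5.4.2]{stoll}.

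For \eqref{betap}, take $f\in\mathbf{h}^p$ and $\varepsilon>0$; by monotonicity pick $r_0<1$ with $\|f\|_p^p-M_p^p(f,r_0)<\varepsilon$, whence $\bigl|\|f\|_{\alpha, p}^{p}-\|f\|_p^p\bigr|\le\|f\|_p^p\,\nu_\alpha([0,r_0])+\varepsilon$. The first term vanishes as $\alpha\to 1^+$ by concentration, and $\varepsilon$ is arbitrary; this proves \eqref{betap}. As a by-product one also gets the auxiliary inclusion $\mathbf{h}^p\subset\mathbf{B}^p_\alpha$, which is what makes the limit meaningful.

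For \eqref{alphap}, assume $|f|\le K$ and split
\[
\|f\|_{\alpha,\alpha p}^{\alpha p}-\|f\|_p^p=\int_0^1(M_{\alpha p}^{\alpha p}-M_p^p)(r)\,d\nu_\alpha(r)+\int_0^1(M_p^p-\|f\|_p^p)(r)\,d\nu_\alpha(r).
\]
The second term tends to $0$ exactly as in the proof of \eqref{betap}. For the first, the identity $\bigl||f|^{\alpha p}-|f|^p\bigr|=|f|^p\bigl||f|^{(\alpha-1)p}-1\bigr|$ combined with the dichotomy $\{|f|\ge\varepsilon\}$ versus $\{|f|<\varepsilon\}$ yields $\sup_{t\in[0,K]}|t^{\alpha p}-t^p|\le K^p\sup_{t\in[\varepsilon,K]}|t^{(\alpha-1)p}-1|+2\varepsilon^p$, and the right-hand side tends to $0$ as $\alpha\to 1^+$ (first fix $\varepsilon$ small, then let $\alpha\to 1$). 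Hence $\sup_r|M_{\alpha p}^{\alpha p}(r)-M_p^p(r)|\to 0$ and the first integral also vanishes, proving \eqref{alphap}.

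Finally, for \eqref{inc}, given $f\in\mathbf{h}^p$ pick a bounded $\mathcal{M}$-log-subharmonic sequence $f_m\to f$ pointwise and in $h^p$, as guaranteed by Definition~\ref{monomono}. Lemma~\ref{hardy2023}, whose derivation just used \eqref{alphap}, supplies the contraction $\|f_m\|_{\alpha,\alpha p}\le\|f_m\|_{h^p}$ for each $m$. Applying Fatou's lemma to $|f_m|^{\alpha p}\Phi_n^\alpha\to|f|^{\alpha p}\Phi_n^\alpha$,
\[
\|f\|_{\alpha,\alpha p}^{\alpha p}\le\liminf_m\|f_m\|_{\alpha,\alpha p}^{\alpha p}\le\liminf_m\|f_m\|_{h^p}^{\alpha p}=\|f\|_{h^p}^{\alpha p}<\infty,
\]
which proves $f\in\mathbf{B}^{\alpha p}_\alpha$, i.e.\ \eqref{inc}. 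The principal difficulty is the uniform-in-$r$ approximation $M_{\alpha p}^{\alpha p}\to M_p^p$ in \eqref{alphap}: since $t\mapsto t^{(\alpha-1)p}$ fails to be uniformly continuous on $[0,K]$ as the exponent shrinks to $0$, one is forced to truncate the sphere by the size of $|f|$, and this is precisely the step where the boundedness hypothesis becomes essential.
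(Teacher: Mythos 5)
Your argument is correct, and for \eqref{betap} it coincides with the paper's: the same polar decomposition against the probability measure $c(\alpha)\Psi(r)\,dr$, the same use of \cite[Theorem~5.4.2]{stoll} for the monotonicity of the spherical means of the $\mathcal{M}$-subharmonic function $|f|^p$, and the same concentration of mass near $r=1$ coming from \eqref{caz}. For \eqref{alphap} you deviate mildly but harmlessly: the paper gets the upper bound from $\|f\|_{\alpha,\alpha p}\le\|f\|_{\alpha p}$ plus dominated convergence and the lower bound from Jensen's inequality $\int_{\mathbb{S}}|f(r\zeta)|^{\alpha p}d\sigma\ge(\int_{\mathbb{S}}|f(r\zeta)|^{p}d\sigma)^{\alpha}$, whereas you handle both directions at once through the uniform estimate $\sup_{t\in[0,K]}|t^{\alpha p}-t^p|\to 0$; both routes use boundedness in an essential way and are equally valid. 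The genuine divergence is in \eqref{inc}: the paper proves it softly and directly, by passing to the least $\mathcal{M}$-harmonic majorant $g$ of $f$ and invoking Stoll's Hardy--Littlewood-type inequality \cite[Theorem~10.6.3]{stoll}, $\|g\|_{\alpha,\alpha p}\le C(p,\alpha,n)\|g\|_{p}=C(p,\alpha,n)\|f\|_p$, so that \eqref{inc} is independent of the main machinery; you instead bootstrap from Lemma~\ref{hardy2023} and Fatou's lemma, which essentially reproduces the paper's proof of \eqref{two232} inside Theorem~\ref{shtune}. Your route is legitimate --- you correctly order the claims so that Lemma~\ref{hardy2023} only consumes \eqref{alphap} for bounded functions, which you have already established, so there is no circularity --- and it even yields the stronger contractive bound $\|f\|_{\alpha,\alpha p}\le\|f\|_{h^p}$ rather than a mere continuous inclusion; the price is that \eqref{inc} now rests on Theorem~\ref{monotone} and the whole isoperimetric apparatus, whereas the paper keeps it as an elementary standalone fact.
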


\begin{proof} We can assume that $p>1$, otherwise we consider the function $g=|f|^{p/2}$ and apply the following proof. To prove \eqref{inc} assume that $f\in \mathbf{h}^p$ and let $g$ be the last $\mathcal{M}-$harmonic majorant of $f$.
We use  the Hardy-Littlewood type inequality for  $\mathcal{M}-$harmonic functions in the unit ball by Stoll: \cite[Theorem~10.6.3]{stoll} (see also \cite{flett}) \begin{equation}\label{eqpo1}\|g\|_{\alpha, \beta p}
\le C(p,\alpha,n)\|g\|_{p}.\end{equation} Since $\|g\|_{p}=\|f\|_{p}$ and $\|f\|_{\alpha, \alpha p}\le \|g\|_{\alpha,\alpha p}$, it follows that $f\in \mathbf{B}_\alpha^{\alpha p}$ and this concludes \eqref{inc}.

From  \eqref{posexp} we know that $\Delta_h f\ge 0$, and by corresponding theorem \cite[Theorem~5.4.2]{stoll}, we get \begin{equation}\label{ec1}\|f\|^p_{p}=\lim_{r\to 1} \int_{\mathbb{S}}|f(r\zeta)|^pd\sigma(\zeta).\end{equation}
Then from \eqref{meanconv} and \eqref{ec1} we have

\begin{equation}\label{david}\begin{split}\|f\|_{\alpha, p}^p&=c(\alpha)\int_{\mathbb{B}}|f(x)|^p\Phi^\alpha_n(|x|)  (1-|x|^2)^{-n}\frac{dV(x)}{{\omega_n}}\\
&={c(\alpha)}\int_0^1 \Psi(r) dr\int_{\mathbb{S}}|f(r\zeta)|^p d\sigma(\zeta)
\\&\le \|f\|^p_{p},\end{split}\end{equation} here $\alpha>1$, $\Psi(r)=r^{n-1}\Phi_n^\alpha(r)(1-r^2)^{-n}$ and $$c(\alpha) \int_{0}^1 \Psi(r) dr=1.$$
On the other hand \eqref{david} implies  $$\limsup_{\alpha\to 1^+}\|f\|_{\alpha, p}\le \|f\|_{p}.$$
Furthermore,  because $$\|f\|^p_{p}=\lim_{r\to 1} \int_{\mathbb{S}}|f(r\zeta )|^p{d\sigma(\zeta)},$$ for every $\epsilon>0$ there exists $r_1<1$ such that
for $r\in(r_1, 1)$ we have
\begin{equation}\label{hardep}\int_\mathbb{S}|f(r\zeta )|^p{d\sigma(\zeta)}\ge \|f\|^p_{p}-\epsilon.\end{equation}
Then  \[\begin{split}\|f\|^p_{\alpha, p}&=c(\alpha)  \int_0^1 r^{n-1} \Phi_n^\alpha(r)(1-r^2)^{-n}dr  \int_\mathbb{S} |f(r\zeta))|^p  d\sigma(\zeta)
\\&\ge c(\alpha)\int_{r_1}^1 r^{n-1} \Phi_n^\alpha(r)(1-r^2)^{-n} dr\int_{\mathbb{S}}|f(r\zeta)|^p{d\sigma(\zeta)}{}\\&\ge c(\alpha) \int_{r_1}^1 r^{n-1} \Phi_n^\alpha(r) (1-r^2)^{-n} dr
(\|f\|^p_{p}-\epsilon).
\end{split}\]
By letting $\alpha\to 1^+$, and noting  that \begin{equation}\label{1m1}1>c(\alpha) \int_{r_1}^1 r^{n-1} \Phi_n^\alpha(r) (1-r^2)^{-n} dr\to 1,\end{equation} when $\alpha\to 1^+$,   we get $$\liminf_{\alpha\to 1^+}\|f\|^p_{\alpha, p}\ge \|f\|^p_{p}-\epsilon$$ and this finishes the proof of \eqref{betap}.

By using the same method as in the first part of the proof precisely \eqref{david} with $p\alpha$ instead of $\alpha$, by using this time that $f$ is bounded we get, we have $$\|f\|_{\alpha,\alpha p}\le \|f\|_{\alpha p}.$$  Then by using the Lesbegue dominated theorem we obtain  \begin{equation}\label{thist}\limsup_{\alpha\to 1^+}\|f\|_{\alpha,\alpha p}\le \|f\|_p.\end{equation}


The opposite inequality is much easier and follows from Jensen's inequality. Indeed $$\int_{\mathbb{S}}|f(r\zeta)|^{\alpha p}d\sigma(\zeta)\ge \left(\int_{\mathbb{S}}|f(r\zeta)|^{ p}d\sigma(\zeta)\right)^\alpha.$$
Hence for $r_1$ satisfying \eqref{hardep} we have
\[\begin{split}\|f\|^{p\alpha}_{\alpha,\alpha p}&=c(\alpha)  \int_0^1 r^{n-1} \Phi_n^\alpha(r)(1-r^2)^{-n}dr  \int_\mathbb{S} |f(r\zeta)|^{p\alpha}  d\sigma(\zeta)
\\&\ge c(\alpha)\int_{r_1}^1 r^{n-1} \Phi_n^\alpha(r)(1-r^2)^{-n} dr\left(\int_{\mathbb{S}}|f(r\zeta)|^p{d\sigma(\zeta)}\right)^\alpha.\end{split}\]
By \eqref{hardep} and \eqref{1m1} we obtain $$\liminf_{\alpha\to 1+}\|f\|^{p\alpha}_{\alpha,\alpha p}\ge \left(\|f\|^p_{p}-\epsilon\right)^\alpha.$$
This finishes the proof of the proposition.

\end{proof}

\begin{proposition}\label{propozicija1} Let $p>0$ and let $f\in\mathbf{h}^p$. Then
\begin{equation}\label{nisapod}\lim_{\alpha\to 1+}\|f\|_{\alpha, \alpha p}=\|f\|_{p}.\end{equation}
\end{proposition}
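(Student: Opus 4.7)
The plan is to split the claim into two one-sided limits. The lower bound
$$\liminf_{\alpha\to 1^+}\|f\|_{\alpha,\alpha p}\ge \|f\|_p$$
follows exactly by the argument already used at the end of the proof of Proposition~\ref{propozicija}: apply Jensen's inequality to the convex function $t\mapsto t^{\alpha}$ on the probability space $(\mathbb{S},d\sigma)$, giving $\int_{\mathbb{S}}|f(r\zeta)|^{\alpha p}d\sigma(\zeta)\ge \left(\int_{\mathbb{S}}|f(r\zeta)|^{p}d\sigma(\zeta)\right)^{\alpha}$. Then, for $\epsilon>0$ pick $r_1<1$ such that \eqref{hardep} holds (which only needs $f\in \mathbf{h}^p$), integrate against $c(\alpha)r^{n-1}\Phi_n^\alpha(r)(1-r^2)^{-n}$ on $[r_1,1]$, take the $(\alpha p)$-th root, and use \eqref{1m1} to pass $\alpha\to 1^+$. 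Since no pointwise bound on $f$ is needed here, this direction works without any boundedness hypothesis.

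For the upper bound
$$\limsup_{\alpha\to 1^+}\|f\|_{\alpha,\alpha p}\le \|f\|_p$$
the key is to avoid the dominated-convergence step that forced the boundedness hypothesis in the proof of \eqref{alphap}. Instead I would invoke Theorem~\ref{shtune}, which is already available at this point in the paper (its proof relies only on Lemma~\ref{hardy2023} and Theorem~\ref{Bergman}, i.e.\ on the bounded case). Apply it with Bergman parameters $\alpha'=\alpha$ and $p'=\alpha p$, so that the ratio $r = p'/\alpha' = p$. Since $f\in \mathbf{h}^p=\mathbf{h}^{r}$, the second inequality in \eqref{two2} yields
$$\|f\|_{\alpha,\alpha p}\le \|f\|_{h^p}=\|f\|_p$$
for every $\alpha>1$. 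Taking $\limsup$ as $\alpha\to 1^+$ gives exactly what is needed.

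Combining the two bounds yields the stated limit. The substantive step is the upper bound, and its main obstacle — the one that forced a boundedness assumption in Proposition~\ref{propozicija} — is bypassed cleanly by the contractivity statement of Theorem~\ref{shtune}, which itself already incorporates the Fatou approximation argument that handles unbounded $f\in\mathbf{h}^p$ via the defining sequence of bounded log-subharmonic $f_m$. Once this is established, the lower bound requires nothing beyond Jensen's inequality and the normalization \eqref{1m1}, neither of which is sensitive to whether $f$ is bounded.
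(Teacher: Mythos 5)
Your proposal is correct and follows essentially the same route as the paper: the author's proof also reuses the Jensen's-inequality lower bound from Proposition~\ref{propozicija} verbatim and replaces the dominated-convergence step for the upper bound by the contractive inequality \eqref{two232} of Theorem~\ref{shtune} applied with ratio $r=p$. Your write-up simply makes explicit the parameter substitution and the non-circularity of invoking Theorem~\ref{shtune}, both of which the paper leaves implicit.
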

\begin{proof}[Proof of Proposition~\ref{nisapod}]
We repeat the previous proof. To get \eqref{thist}, we use Theorem~\ref{shtune}, more specifically \eqref{two232}.
\end{proof}


\subsection*{Acknowledgments} I would like to thank the anonymous referee for very helpful comments that had a significant impact on this paper. His  idea is used for proving Proposition~\ref{may2023}. He also draw my attention to the recent related paper \cite{cerde}, where isoperimetric inequality and subharmonicity are applied in the more general context. I would like to thank  Petar  Melentijevi\'c for  drawing my attention to the papers  \cite{kulik,3}. I am thankful to  Kristian Seip and  Aleksei Kulikov for the very helpful discussion.

\end{document}